\documentclass[11pt]{amsart}
\usepackage{cases}

\theoremstyle{plain}
\newtheorem{theorem}                {Theorem}      [section]
\newtheorem*{theorem*}                {Theorem}
\newtheorem{proposition}  [theorem]  {Proposition}

\newtheorem{lemma}        [theorem]  {Lemma}

\theoremstyle{definition}

\newtheorem{remark}       [theorem]  {Remark}
\newtheorem{definition}   [theorem]  {Definition}

\DeclareMathOperator{\trace}{trace} 

 \DeclareMathOperator{\id}{I}

\DeclareMathOperator{\cst}{constant}

\DeclareMathOperator{\grad}{grad}

\DeclareMathOperator{\ric}{Ric}
\DeclareMathOperator{\nil}{Nil_3}

\numberwithin{equation}{section}

\begin{document}

\title[Simons Formulas]{Simons formulas in complex space forms and product spaces}

\author{Dorel~Fetcu}
\author{Stefano~Nardulli}

\thanks{This work was partially supported by the Grant No. $2026/02612-3$, from FAPESP, Brazil. S. Nardulli was supported by FAPESP Auxilio Jovem Pesquisador No. $2021/05256-0$, CNPq Bolsa de Produtividade em Pesquisa 1D, No. $12327/2021-8$, $23/08246-0$, Geometric Variational Problems in Smooth and Nonsmooth
Metric Spaces No. $441922/2023-6$.}

\address{Department of Mathematics and Informatics\\
Gh. Asachi Technical University of Iasi\\
Bd. Carol I, 11A \\
700506 Iasi, Romania} \email{dorel.fetcu@academic.tuiasi.ro}

\address{Department of Mathematics\\ Federal University of ABC\\
09210-580, Santo Andr\'e-SP, Brazil}
\email{stefano.nardulli@ufabc.edu.br}

\subjclass[2020]{32V40, 53C42, 31B30}

\keywords{Simons Formulas, PMC Submanifolds, Complex Space Forms}

\begin{abstract} We compute Simons type equations for parallel mean curvature submanifolds in complex space forms $N^n(c)$, with constant holomorphic sectional curvature $c$, and product spaces $N^n(c)\times\mathbb{R}$. These formulas are then used to characterize some of these submanifolds. 
\end{abstract}

\maketitle

\section{Introduction}

One of the most influential article in modern Differential Geometry, was published almost six decades ago, in 1968. In this paper, J. Simons obtained an equation for the Laplacian of the second fundamental form of a minimal subamanifold of a Riemannian manifold (see \cite{JS}). This formula was then used to characterize certain minimal submanifolds of spheres. Very fast, such formulas, that one calls nowadays Simons type equations, were also developed for CMC hypersurfaces in space forms (see \cite{NS}) and then for constant mean curvature (CMC) and parallel mean curvature (PMC) submanifolds in space forms (see, e.g., \cite{AdC,AT,WS,BS,Y1}). Almost ten years later, in~\cite{CY}, it was proved a general formula of this type for a symmetric $(1,1)$-tensor defined on a Riemannian manifold. All these equations use the fact that the shape operator $A$ of a submanifold in a space form satisfies the classical Codazzi property $(\nabla_XA)Y=(\nabla_YA)X$.

However, in other ambient spaces, the shape operator $A$ may fail to satisfy this property and the situation becomes more complicated, as shown, for example, in \cite{B}. In the case of CMC surfaces in most of the homogeneous $3$-manifolds this problem was solved by using another operator, obtained from the Abresch-Rosenberg differential. This differential, introduced by U.~Abresch and H.~Rosenberg in \cite{AR,AR2} , is the traceless part of a certain quadratic form defined on surfaces of $\mathbb{S}^2\times\mathbb{R}$, $\mathbb{H}^2\times\mathbb{R}$, $\widetilde{PSL}_2(\mathbb{R})$, and $\nil$. The differential is holomorphic if and only if the surface is CMC. In order to exploit this property, in \cite{B}, one has defined an operator $S$, related to the Abresch-Rosenberg differential, on a CMC surface of a product space, which satisfies the classical Codazzi property. Then a Simons type equation for $S$ rather than the shape operator $A$ was obtained. This result was then generalized to all other spaces where Abresch-Rosenberg differentials do exist (see \cite{ET}). Similar results were obtained for PMC surfaces in complex space forms (see \cite{FP}). 

One section of our paper is devoted to the study of PMC surfaces in a product space $N^n(c)\times\mathbb{R}$, where $N^n(c)$ is a complex space form with constant holomorphic sectional curvature $c$, by means of developing and using two Simons type equations for operators provided by the holomorphic differentials introduced in \cite{FR-TAMS}.

When dealing with higher dimensional submanifolds in ambient spaces where the shape operator does not satisfy the classical Codazzi property, one does not have the alternative offered by the use of holomorphic differentials defined on CMC or PMC surfaces, but, sometimes, for very special submanifolds, one could still obtain Simons type equations involving the shape operator. Such a formula was proved, for example, for Lagrangian submanifolds in a complex space form (see \cite{Chen77}) (and also for totally real surfaces in a complex space form (see \cite{Chen74})). 

In the second part of our paper, we consider PMC submanifolds in a complex space form $N^n(c)$ or in a product space $N^n(c)\times\mathbb{R}$ and compute the Laplacian of the squared norm of $A_H$, where $A$ is the shape operator and $H$ is the mean curvature vector field. Then we use these two Simons type formulas to characterize minimal and CMC hypersurfaces, PMC totally real and PMC anti-invariant submanifolds, under additional geometric hypotheses. The main results here are describing gap phenomena, providing information about the admissible range for the norm of the shape operator or for the mean curvature. This is to be expected when one uses inequalities derived from Simons type formulas, although such equations have a wider range of application in the theory of submanifolds. For other ways of working with geometric inequalities we refer, for example, to the very recent paper \cite{NG}.

\noindent\textbf{Acknowledgments.} The first author would like to thank the Department of Mathematics of the Federal University of ABC in Santo Andr\'e, SP, Brazil, for providing a very stimulative work environment during the preparation of this paper.

\noindent{\bf Conventions.} We use the following definitions and sign conventions. Consider $\phi:M\rightarrow N$ be a submanifold in a Riemannian manifold. In general, we will not indicate explicitly the Riemannian metrics on $M$ or $N$. Then 
$$
\Delta=\trace(\nabla^{\phi})^2 =\trace(\nabla^{\phi}\nabla^{\phi}-\nabla^{\phi}_{\nabla})
$$ 
is the rough Laplacian defined on the set of all sections in $\phi^{-1}(TN)$, that is on $C(\phi^{-1}(TN))$, and $\bar R$ is the curvature tensor field of $N$, given by 
$$
\bar R(X,Y)Z=[\bar\nabla_X,\bar\nabla_Y]Z-\bar\nabla_{[X,Y]}Z.
$$
Here, $\nabla^{\phi}$ denotes the pull-back connection on $\phi^{-1}(TN)$, while $\nabla$ and $\bar\nabla$ are the Levi-Civita connections on $TM$ and $TN$, respectively.

\section{Preliminaries}

Let $N^n(c)$ be a complex space form with complex dimension $n$, complex
structure $(J,\langle,\rangle)$, and constant holomorphic
sectional curvature $c$, i.e., $N^n(c)$ is either $\mathbb{C}P^n(c)$,
or $\mathbb{C}^n$, or $\mathbb{C}H^n(c)$, as $c>0$,
$c=0$, and $c<0$, respectively. Then the curvature tensor of $N(c)$ is given by
\begin{eqnarray}\label{eq:curv_cpn}
\bar R(X,Y)Z&=&\frac{c}{4}\{\langle Y,Z\rangle X-\langle X,Z\rangle Y+\langle JY,Z\rangle JX-\langle JX,Z\rangle JY\\\nonumber &&+2\langle
X,JY\rangle JZ\}.
\end{eqnarray}

A submanifold of a complex space form with the property that $J$ maps the tangent bundle into the normal one is called a \textit{totally real} submanifold.

Consider now the product space $N^n(c)\times\mathbb{R}$ and define the following tensors:
$$
\varphi:=J\circ d\pi,\quad\xi:=\frac{\partial}{\partial t},\quad\eta:=dt,\quad\textnormal{and}\quad
\langle,\rangle:=\langle,\rangle_{N^n(c)}+dt\otimes dt,
$$
where $\pi:N^n(c)\times\mathbb{R}\rightarrow N^n(c)$ is the projection map and $t$ is the standard coordinate function on the real axis. Then these tensors define a cosymplectic structure with constant $\varphi$-sectional curvature equal to $c$, as we shall explain in the following.

An \textit{almost contact metric structure} on an odd-dimensional manifold
$\tilde N^{2n+1}$ is given by $(\varphi,\xi,\eta,\langle,\rangle)$, where $\varphi$ is
a tensor field of type $(1,1)$ on $\tilde N$, $\xi$ is a vector field,
$\eta$ is its dual $1$-form with respect to $\langle,\rangle$, while $\langle,\rangle$ is a Riemannian metric such that
$$
\varphi^{2}X=-X+\langle X,\xi\rangle\xi\quad\textnormal{and}\quad
\langle\varphi X,\varphi Y\rangle=\langle X,Y\rangle-\eta(X)\eta(Y),
$$
for any tangent vector fields $X$ and $Y$. The fundamental $2$-form is defined as $\Omega(X,Y)=\langle X,\varphi Y\rangle$. Such a structure is
called {\it normal} if
$$
\mathcal N_{\varphi}(X,Y)+2d\eta(X,Y)\xi=0,
$$
where $\mathcal N_{\varphi}$ is the Nijenhuis tensor.

An almost contact metric manifold $(N,\varphi,\xi,\eta,\langle,\rangle)$ is a \textit{cosymplectic manifold} if it is normal with $\eta$ and $\Omega$ closed.

Equivalently, an almost contact metric manifold is cosymplectic if and only if $\varphi$ is parallel, i.e., $\tilde\nabla\varphi=0$, where $\tilde\nabla$ is the Levi-Civita connection. This implies that also $\xi$ and $\eta$ are parallel. A cosymplectic manifold has a natural local product structure as a product of a K\"ahler manifold and a $1$-dimensional manifold but there exist compact cosymplectic manifolds which are not global products (for more details on cosymplectic manifolds, see, for example, \cite{ABC,B,CdLM}). 

Next, let $\tilde N$ be a cosymplectic manifold. The
sectional curvature of a $2$-plane generated by $X$ and $\varphi X$,
where $X$ is a unit vector orthogonal to $\xi$, is called the
\textit{$\varphi$-sectional curvature} determined by $X$. A cosymplectic
manifold with constant $\varphi$-sectional curvature $c$ is called a
\textit{cosymplectic space form} and will be denoted by $\tilde N(c)$. The curvature
tensor field of $\tilde N(c)$ is given by
\begin{eqnarray}\label{eq:curv_product}
\tilde R(X,Y)Z&=&\frac{c}{4}\{\langle Y,Z\rangle X-\langle X,Z\rangle Y+\langle X,\varphi Z\rangle \varphi Y-\langle Y,\varphi Z\rangle\varphi X\\
\nonumber &&+2\langle X,\varphi Y\rangle\varphi Z+\eta(X)\eta(Z)Y-\eta(Y)\eta(Z)X\\\nonumber &&+\langle X,Z\rangle\eta(Y)\xi-\langle Y,Z\rangle\eta(X)\xi\}.
\end{eqnarray}

A submanifold of a cosymplectic manifold is called \textit{anti-invariant} if $\varphi$ maps the tangent bundle into the normal one.

Now, let $\Sigma^m$, $m\leq 2n-1$, be an $m$-dimensional submanifold of a complex space form $N^n(c)$. For any vector field $X$ tangent to $\Sigma$ we will write $JX=tX+nX$, where $tX$ stands for the tangent part of $JX$ and $nX$ for the normal one. From the equation of Gauss
\begin{eqnarray*}
\langle R(X,Y)Z,W\rangle&=&\langle\bar R(X,Y)Z,W\rangle\\ &&+\sum_{\alpha=m+1}^{2n}\{\langle A_{\alpha}Y,Z\rangle\langle A_{\alpha}X,W\rangle-\langle A_{\alpha}X,Z\rangle\langle A_{\alpha}Y,W\rangle\},
\end{eqnarray*}
and \eqref{eq:curv_cpn} one obtains the expression of the curvature tensor
\begin{eqnarray}\label{eq:R_cpn}
R(X,Y)Z&=&\frac{c}{4}\{\langle Y,Z\rangle X-\langle X,Z\rangle Y+\langle JY,Z\rangle tX-\langle JX,Z\rangle tY\\\nonumber &&+2\langle
X,JY\rangle tZ\}+\sum_{\alpha=m+1}^{2n}\{\langle A_{\alpha}Y,Z\rangle A_{\alpha}X-\langle A_{\alpha}X,Z\rangle A_{\alpha}Y\},
\end{eqnarray}
for all vector fields $X$, $Y$, $Z$, and $W$ tangent to $\Sigma^m$. The shape operator $A$ is given by the equation of Weingarten
$$
\bar\nabla_XV=-A_VX+\nabla^{\perp}_XV,
$$
for any normal vector field $V$. Here $\nabla^{\perp}$ denotes the connection in the normal bundle, and
$A_{\alpha}=A_{E_{\alpha}}$, $\{E_{\alpha}\}_{\alpha=m+1}^{n+1}$
is a local orthonormal frame field in the normal bundle. The connection $\nabla$ on $\Sigma$ and the second fundamental form $\sigma$ are given by the Gauss equation
$$
\bar\nabla_XY=\nabla_XY+\sigma(X,Y).
$$

If $\Sigma^m$, $m\leq 2n$ is a submanifold in a product space $N^n(c)\times\mathbb{R}$, then, from \eqref{eq:curv_product}, we compute its curvature tensor as
\begin{eqnarray}\label{eq:R_product}
R(X,Y)Z&=&\frac{c}{4}\{\langle Y,Z\rangle X-\langle X,Z\rangle Y+\langle X,\varphi Z\rangle tY-\langle Y,\varphi Z\rangle tX\\
\nonumber &&+2\langle X,\varphi Y\rangle tZ+\eta(X)\eta(Z)Y-\eta(Y)\eta(Z)X\\\nonumber &&+\langle X,Z\rangle\eta(Y)T-\langle Y,Z\rangle\eta(X)T\}\\\nonumber &&+\sum_{\alpha=m+1}^{2n+1}\{\langle A_{\alpha}Y,Z\rangle A_{\alpha}X-\langle A_{\alpha}X,Z\rangle A_{\alpha}Y\},
\end{eqnarray}
where we decomposed the vector fields $\varphi X=tX+nX$ and $\xi=T+N$ in their tangent and normal parts, respectively.

\begin{definition} If the mean curvature vector field $H$ of a submanifold $\Sigma^m$ of a Riemannian manifold is
parallel in the normal bundle, i.e., $\nabla^{\perp}H=0$, then $\Sigma^m$ is called a \textit{parallel mean curvature (PMC) submanifold}. If the mean curvature $|H|$ of $\Sigma^m$ is constant, then $\Sigma^m$ is a called a \textit{constant mean curvature (CMC) submanifold}.
\end{definition}

In order to define biharmonic submanifolds, let us first consider a smooth map $\phi: M^{m} \rightarrow \bar M^{n}$ between two Riemannian manifolds. Then $\phi$ is called a \textit{biharmonic map} if it is a critical point of the bienergy functional, assuming $M$ compact,
$$
E_{2}:C^{\infty}(M,\bar M)\to \mathbb{R}, \quad E_{2}(\phi)=\frac{1}{2}\int_{M} \vert \tau(\phi)\vert ^{2} dv,
$$
and $\tau(\phi)= \trace  \nabla d \phi$ is the tension field of
$\phi$. These maps are characterized by the Euler-Lagrange equation, also known as the biharmonic equation (see \cite{Jiang}):
\begin{eqnarray}\label{tau-2}
\tau_{2}(\phi)=\Delta \tau(\phi)-\trace \overline{R}(d\phi(\cdot),\tau(\phi))d \phi (\cdot)=0,
\end{eqnarray}
where $\tau_{2}(\phi)$ is {\it the bitension field} of $\phi$. If $M$ is not compact, then $\phi$ is called biharmonic if it satisfies equation \eqref{tau-2}.

As any harmonic map is biharmonic, we are interested in studying non-harmonic biharmonic  maps, which are called {\it proper-biharmonic maps}.

A \textit{biharmonic submanifold} in a Riemannian manifold is a submanifold for which the inclusion map is biharmonic. In Euclidean space this notion of a biharmonic submanifold coincides with that in \cite{BYC}, as in both cases biharmonic submanifolds are characterized by the equation $\Delta H=0$ (for a detailed account on biharmonic maps and submanifolds, see \cite{Chen-Ou}).

\begin{theorem}[\cite{BMO,Ou}]\label{thm:split} A submanifold $\Sigma^m$ in a Riemannian manifold $\bar M$ is biharmonic if and only if
\begin{equation}\label{eq:bih}
\begin{cases}
-\Delta^{\perp}H+\trace\sigma(\cdot,A_H\cdot)+\trace(\bar R(\cdot,H)\cdot)^{\perp}=0\\
\frac{m}{2}\grad|H|^2+2\trace A_{\nabla^{\perp}_{\cdot}H}(\cdot)+2\trace(\bar R(\cdot,H)\cdot)^{\top}=0,
\end{cases}
\end{equation}
where $\Delta^{\perp}$ is the Laplacian in the normal bundle.
\end{theorem}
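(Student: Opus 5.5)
The plan is to start from the biharmonic equation \eqref{tau-2} for the inclusion map $\phi:\Sigma^m\to\bar M$. Its tension field is $\tau(\phi)=mH$, so the submanifold is biharmonic if and only if
$$
\Delta H-\trace\bar R(\cdot,H)\cdot=0 .
$$
The argument then expands the rough Laplacian $\Delta H=\trace(\nabla^{\phi})^2H$ using the Weingarten and Gauss formulas, and splits the result into normal and tangent parts. The curvature term already splits trivially as $(\bar R(\cdot,H)\cdot)^{\perp}+(\bar R(\cdot,H)\cdot)^{\top}$.

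To compute the Laplacian, fix a point $p$ and a local orthonormal frame $\{X_i\}$ geodesic at $p$, so that $\nabla_{X_i}X_j=0$ at $p$. Weingarten gives $\bar\nabla_{X_i}H=-A_HX_i+\nabla^{\perp}_{X_i}H$. Differentiating once more and applying Gauss and Weingarten again yields
$$
\bar\nabla_{X_i}\bar\nabla_{X_i}H=-\nabla_{X_i}(A_HX_i)-\sigma(X_i,A_HX_i)-A_{\nabla^{\perp}_{X_i}H}X_i+\nabla^{\perp}_{X_i}\nabla^{\perp}_{X_i}H .
$$
Summing over $i$, the normal part is $\Delta^{\perp}H-\trace\sigma(\cdot,A_H\cdot)$ and the tangent part is $-\trace(\nabla A_H)-\trace A_{\nabla^{\perp}_\cdot H}(\cdot)$. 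Changing the overall sign, the normal component gives the first equation of \eqref{eq:bih} directly.

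The main step is rewriting the tangent term $\sum_i(\nabla_{X_i}A_H)X_i$, where the ambient curvature enters through the Codazzi equation. Since the ambient space is general, $A$ need not be Codazzi. Instead, $\langle(\nabla_{X_i}A_H)X_i,Y\rangle$ is computed as follows:

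1. Write it as $\langle(\nabla_{X_i}A)_H X_i,Y\rangle+\langle A_{\nabla^{\perp}_{X_i}H}X_i,Y\rangle$.
2. Apply the Codazzi equation
$$
(\nabla_X\sigma)(Y,Z)-(\nabla_Y\sigma)(X,Z)=(\bar R(X,Y)Z)^{\perp}
$$
in the form $\langle(\nabla_{X_i}A)_HX_i,Y\rangle=\langle(\nabla_{X_i}\sigma)(X_i,Y),H\rangle$.
3. Swap to $\langle(\nabla_Y\sigma)(X_i,X_i),H\rangle$ at the cost of the term $\langle\bar R(X_i,Y)X_i,H\rangle$.

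Since $\trace\sigma=mH$, the swapped term becomes
$$
m\langle\nabla^{\perp}_YH,H\rangle=\frac m2\,Y(|H|^2).
$$
By the curvature symmetries, $\langle\bar R(X_i,Y)X_i,H\rangle=\langle\bar R(X_i,H)X_i,Y\rangle$. Collecting everything, the tangent part of $\tau_2$ becomes a multiple of
$$
\frac m2\grad|H|^2+2\trace A_{\nabla^{\perp}_\cdot H}(\cdot)+2\trace(\bar R(\cdot,H)\cdot)^{\top}.
$$

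The expected main obstacle is keeping the signs and factors straight in this step, namely getting the coefficient $2$ in front of both the $A_{\nabla^\perp H}$ term and the curvature term. One of each comes from the direct expansion and one from the Codazzi swap, with the paper's convention $\bar R(X,Y)=[\bar\nabla_X,\bar\nabla_Y]-\bar\nabla_{[X,Y]}$.
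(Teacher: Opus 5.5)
Your proposal is correct. The paper gives no proof of its own and simply quotes this result from the cited sources, where it is derived exactly as you do: write $\tau_2=m(\Delta H-\trace\bar R(\cdot,H)\cdot)$, split $\Delta H$ via the Gauss and Weingarten formulas, and use the Codazzi equation to rewrite $\trace\nabla A_H$ as $\frac m2\grad|H|^2+\trace A_{\nabla^\perp_\cdot H}(\cdot)+\trace(\bar R(\cdot,H)\cdot)^{\top}$, which produces both factors $2$. The only point to state explicitly is that the sign of the first equation presumes $\Delta^\perp=\trace(\nabla^\perp)^2$. This is the same sign convention as the paper's $\Delta=\trace(\nabla^\phi)^2$, and it is the one your computation implicitly uses.
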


We end this section by recalling the following two results, which will be used later on.

\begin{lemma}[\cite{AdC,O}]\label{l:oku} Let $a_i$, $i=1,\ldots,m$, be real numbers such that $\sum_{i=1}^ma_i=0$ and $\sum_{i=1}^ma_i^2=b^2$, where $b=\cst\geq 0$. Then
$$
-\frac{m-2}{\sqrt{m(m-1)}}b^3\leq\sum_{i=1}^ma_i^3\leq\frac{m-2}{\sqrt{m(m-1)}}b^3,
$$
and equality holds in the right-hand $($left-hand$)$ side if and only if $(m-1)$ of the $a_i$'s are non-positive and equal $($$(m-1)$ of the $a_i$'s are non-negative and equal$)$.
\end{lemma}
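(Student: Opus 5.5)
The bounds are proved by a Lagrange multiplier argument on the constraint set, and the equality cases by a direct check of the critical points.

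\emph{Reduction.} The case $b=0$ is trivial, since then every $a_i=0$; so assume $b>0$. By homogeneity we may take $b=1$, because replacing $a_i$ by $a_i/b$ scales $\sum a_i^3$ by $b^{-3}$. Also, replacing $a$ by $-a$ exchanges the two inequalities and their equality conditions. It therefore suffices to show
$$
\sum_i a_i^3\leq \frac{m-2}{\sqrt{m(m-1)}}
$$
on the compact set $K=\{\sum a_i=0,\ \sum a_i^2=1\}$, and to identify the points where the maximum is attained. For $m=2$ we have $a_2=-a_1$, so $\sum a_i^3=0$ and everything is immediate; assume $m\geq3$.

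\emph{Critical points.} The function $f(a)=\sum a_i^3$ is continuous on the compact set $K$, so it attains its maximum there. The gradients $(1,\dots,1)$ and $2a$ of the constraints are linearly independent on $K$, because $a$ is orthogonal to $(1,\dots,1)$ and $a\neq0$. Hence at a maximizer there are multipliers $\lambda,\mu$ with
$$
3a_i^2=\lambda+2\mu a_i \quad\text{for all } i.
$$
So every $a_i$ is a root of one fixed quadratic, and the $a_i$ take at most two distinct values, say $x$ with multiplicity $k$ and $y$ with multiplicity $m-k$, where $1\leq k\leq m-1$.

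\emph{Evaluating $f$ on two-valued vectors.} The constraints $kx+(m-k)y=0$ and $kx^2+(m-k)y^2=1$ give
$$
x=\sqrt{\frac{m-k}{mk}},\qquad y=-\sqrt{\frac{k}{m(m-k)}},
$$
up to a simultaneous sign change. A direct computation then yields
$$
f=\frac{m-2k}{\sqrt{mk(m-k)}}
$$
with the same sign ambiguity. Set $g(k)=(m-2k)/\sqrt{k(m-k)}$.

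\emph{Maximizing over $k$ (the main step).} We must show that $g$ attains its maximum over $1\leq k\leq m-1$ exactly at $k=1$. Treat $k$ as a real variable; it suffices to show $g$ is strictly decreasing on $(0,m)$. Write $g=u/v$ with $u=m-2k$ and $v=\sqrt{k(m-k)}$. Then $u'=-2$ and $v'=(m-2k)/(2v)$, so
$$
g'=\frac{u'v-uv'}{v^2}=\frac{-4k(m-k)-(m-2k)^2}{2v^3}.
$$
The numerator simplifies: $4k(m-k)+(m-2k)^2=m^2$, so $g'=-m^2/(2v^3)<0$. Hence $g$ is strictly decreasing, and the maximum over $1\leq k\leq m-1$ is $g(1)=(m-2)/\sqrt{m-1}$. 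Dividing by $\sqrt m$ gives exactly the bound $\frac{m-2}{\sqrt{m(m-1)}}$. The choice of sign with $x>0$ is the one that gives the positive value.

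\emph{Equality case.} Since $g$ is strictly decreasing, the maximum is attained exactly at $k=1$ with the positive sign. There one entry equals $\sqrt{(m-1)/m}>0$ and the remaining $m-1$ entries are equal and negative. Conversely, if $m-1$ entries are equal and non-positive, the constraints force exactly this configuration, so equality holds.

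The lower bound and its equality case follow by applying the above to $-a$.
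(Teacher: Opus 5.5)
Your proof is correct, and it is essentially the standard proof of this lemma. The paper itself gives no proof and only quotes the result from Okumura and Alencar--do Carmo, where the argument is the same: Lagrange multipliers force the critical points to be two-valued, the critical value is $\frac{m-2k}{\sqrt{mk(m-k)}}\,b^3$, and this is monotone in $k$. One cosmetic fix: the remark that ``$x>0$ gives the positive value'' holds only for $k<m/2$. It is cleaner to observe that $-g(k)=g(m-k)$, so every critical value equals $g(k)/\sqrt{m}$ for some $1\le k\le m-1$, and the maximum $g(1)/\sqrt{m}$ is attained only at the configuration you describe.
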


\begin{theorem}[Omori-Yau Maximum Principle, \cite{Y}]\label{OY} If $\Sigma^m$ is a complete Riemannian
manifold with Ricci curvature bounded from below, then for any
smooth function $u\in C^2(\Sigma^m)$ with $\sup_{\Sigma^m}
u<+\infty$ there exists a sequence of points
$\{p_k\}_{k\in\mathbb{N}}\subset \Sigma^m$ satisfying
$$
\lim_{k\rightarrow\infty}u(p_k)=\sup_{\Sigma^m}u,\quad |\nabla u|(p_k)<\frac{1}{k}\quad\textnormal{and}\quad\Delta u(p_k)<\frac{1}{k}.
$$
\end{theorem}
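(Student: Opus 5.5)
The plan is the classical argument: perturb $u$ by a small multiple of a proper function built from the distance to a fixed point, so that the perturbed function attains a maximum, and then read off the two inequalities at that maximum using the Laplacian comparison theorem.

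\emph{Setup.} Fix $o\in\Sigma^m$ and let $r(x)=d(o,x)$. By completeness and Hopf--Rinow, closed metric balls are compact. Write the Ricci bound as $\ric\geq -(m-1)K$ with $K>0$. At points where $r$ is smooth, i.e. away from $o$ and from the cut locus of $o$, the Laplacian comparison theorem gives
$$
\Delta r\leq (m-1)\sqrt K\coth(\sqrt K r)\leq C_0\Big(1+\frac1r\Big),
$$
with $C_0$ depending only on $m$ and $K$. I take the auxiliary function $\phi=\log(1+r^2)$. It tends to $+\infty$ at infinity, $|\nabla\phi|=\frac{2r}{1+r^2}\leq 1$, and off the cut locus
$$
\Delta\phi=\frac{2r}{1+r^2}\Delta r+\frac{2(1-r^2)}{(1+r^2)^2}\leq C_1,
$$
where the potentially singular term $\frac{2r}{1+r^2}\cdot\frac{C_0}{r}$ stays bounded near $o$.

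\emph{Perturbation argument.} Set $u^*=\sup u<\infty$. Given $k$, pick $x_k$ with $u(x_k)>u^*-\frac1{2k}$, and choose $\varepsilon_k>0$ small so that
$$
\varepsilon_k\max\{1,C_1\}<\tfrac1k\quad\textnormal{and}\quad\varepsilon_k\phi(x_k)<\tfrac1{2k}.
$$
Consider $u_k=u-\varepsilon_k\phi$. Since $u$ is bounded above and $\phi\to\infty$ on a space whose closed balls are compact, $u_k$ attains a global maximum at some point $p_k$. Then
$$
u(p_k)\geq u_k(p_k)\geq u_k(x_k)>u^*-\tfrac1k,
$$
so $u(p_k)\to u^*$. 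If $r$ is smooth at $p_k$, first-order conditions give $\nabla u=\varepsilon_k\nabla\phi$, hence $|\nabla u|(p_k)\leq\varepsilon_k<\frac1k$. Second-order conditions give $\Delta u(p_k)\leq\varepsilon_k\Delta\phi(p_k)\leq\varepsilon_kC_1<\frac1k$.

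\emph{Main obstacle: nonsmoothness of $r$.} The hard step is when $p_k$ is $o$ or lies on the cut locus of $o$. I would use Calabi's trick. Take a minimizing geodesic $\gamma$ from $o$ to $p_k$, and for small $\delta>0$ set
$$
r_\delta(x)=\delta+d(\gamma(\delta),x).
$$
By the triangle inequality $r_\delta\geq r$, with equality at $p_k$. Moreover $r_\delta$ is smooth near $p_k$, since $p_k$ is not conjugate to, nor in the cut locus of, $\gamma(\delta)$. Replacing $\phi$ by $\phi_\delta=\log(1+r_\delta^2)$, which is $\ge\phi$ with equality at $p_k$, the function $u-\varepsilon_k\phi_\delta$ still has a local maximum at $p_k$. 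The same estimates then hold, because Laplacian comparison applied from $\gamma(\delta)$ gives $|\nabla\phi_\delta|\leq1$ and $\Delta\phi_\delta\leq C_1$ uniformly in $\delta$ small (adjusting $C_1$ slightly). The case $p_k=o$ is handled the same way, or by choosing $o$ where $r^2$ is smooth. Letting the construction run over $k$ yields the sequence $\{p_k\}$ with all three properties.
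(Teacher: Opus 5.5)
The paper does not prove this statement; it quotes it from Yau and uses it as a black box in Sections 5 and 7. Your argument is the standard proof: perturb $u$ by $\varepsilon_k\log(1+r^2)$, bound $\Delta\phi$ from above by Laplacian comparison, and use Calabi's trick at cut points. It is correct.

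Two spots could be tightened.

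First, the case $p_k=o$. Calabi's trick as you wrote it does not apply there: there is no nontrivial minimizing geodesic from $o$ to itself, and $r_\delta(o)=2\delta\neq r(o)$. It is also not needed. Since $r^2$ is smooth near $o$, $\phi$ is smooth at $o$, with $\nabla\phi(o)=0$ and $\Delta\phi(o)=2m$. By continuity of $\Delta\phi$ near $o$, where $r$ is smooth away from $o$, this value is at most $C_1$.

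Second, at a cut point $p_k$, write $\rho=r(p_k)$. Comparison from $\gamma(\delta)$ gives $\Delta r_\delta(p_k)\leq C_0\bigl(1+1/(\rho-\delta)\bigr)$. This is not literally the same bound as at smooth points. However, it tends to $C_0(1+1/\rho)$ as $\delta\to 0$, so $\limsup_{\delta\to0}\Delta\phi_\delta(p_k)\leq C_1$. Hence the strict inequality $\varepsilon_kC_1<1/k$ still yields $\Delta u(p_k)<1/k$, which is what your phrase ``adjusting $C_1$ slightly'' amounts to.
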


\section{Parallel mean curvature surfaces in $\mathbb{C}P^n\times\mathbb{R}$ and $\mathbb{CH}^n\times\mathbb{R}$}

Let us denote by $N^n(c)\times\mathbb{R}$ the product spaces $\mathbb{C}P^n(c)\times\mathbb{R}$ or $\mathbb{CH}^n(c)\times\mathbb{R}$ as $c>0$ or $c<0$, respectively, endowed with their cosymplectic structure, and consider a PMC surface $\Sigma^2$ in $N^n(c)\times\mathbb{R}$.

In \cite[Theorem 3.2]{FR-TAMS} one proved that the $(2,0)$-part $Q^{(2,0)}$ of the quadratic form $Q$ defined on $\Sigma^2$ by
\begin{equation}\label{eq:Q}
Q(X,Y)=8|H|^2\langle\sigma(X,Y),H\rangle-c|H|^2\eta(X)\eta(Y)+3c\langle \varphi X,
H\rangle\langle \varphi Y, H\rangle,
\end{equation}
is holomorphic.

In the same way as in the case of PMC surfaces in complex space forms (see \cite{FP}), we obtain the following result.

\begin{theorem}\label{thm} Let $\Sigma^2$ be a complete non-minimal PMC surface with
non-negative Gaussian curvature $K$ in $N^n(c)\times\mathbb{R}$. Then either
\begin{enumerate}
\item the surface is flat; or

\item there exists a point $p\in\Sigma^2$ such that $K(p)>0$ and $Q^{(2,0)}$ vanishes on $\Sigma^2$.
\end{enumerate}
\end{theorem}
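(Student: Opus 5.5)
Since $Q^{(2,0)}$ is holomorphic (Theorem 3.2 of \cite{FR-TAMS}), I will follow the strategy of \cite{B,FP}: derive a Simons-type equation for a symmetric operator built from $Q$, then use non-negative curvature through a maximum-principle or Liouville argument.

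First, I define on $\Sigma^2$ the self-adjoint operator $S$ by $\langle SX,Y\rangle=Q(X,Y)$, that is,
$$
S=8|H|^2A_H-c|H|^2\,\eta\otimes T+3c\,\langle\varphi\cdot,H\rangle\,(\varphi H)^{\top}
$$
(up to the obvious identifications), together with its traceless part $\phi=S-\frac{\trace S}{2}\id$. On a surface, $|\phi|^2$ is a multiple of $|Q^{(2,0)}|^2$, so $Q^{(2,0)}$ vanishes exactly where $\phi$ does. I then show that $\phi$ (equivalently $S$) satisfies the Codazzi property $(\nabla_X\phi)Y=(\nabla_Y\phi)X$. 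To do this, I differentiate the three terms using $\nabla^\perp H=0$ and the cosymplectic identities $\tilde\nabla\varphi=0$, $\tilde\nabla\xi=0$, which give $\nabla_XT=A_NX$ and $\nabla^\perp_XN=-\sigma(X,T)$, and similarly for $t$ and $n$. I then compare with the Codazzi equation for $A_H$, whose normal curvature term comes from \eqref{eq:curv_product}. This is the same computation that shows $Q^{(2,0)}$ is holomorphic, so I expect the non-Codazzi terms to cancel. In any case, holomorphicity of $Q^{(2,0)}$ on a surface is equivalent to $\phi$ being traceless Codazzi, as long as $\trace S$ is constant. I will check this directly: $\trace A_H=2|H|^2$ is constant, and the $\eta$- and $\varphi$-terms combine suitably. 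If $\trace S$ turns out not to be constant, I will work with $\phi$ alone, since for a traceless symmetric tensor on a surface the Codazzi condition is equivalent to holomorphicity.

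Second, I use the Cheng--Yau formula \cite{CY} for a traceless Codazzi tensor on a surface:
$$
\tfrac12\Delta|\phi|^2=|\nabla\phi|^2+2K|\phi|^2 .
$$
Where $\phi\neq0$, this gives the standard identity $\Delta\log|\phi|=2K$. Alternatively, since $Q^{(2,0)}$ is holomorphic, its zeros are isolated unless it vanishes identically. Where it does not vanish, the metric $|Q^{(2,0)}|\,|dz|^2$ is flat, which gives $\Delta\log|\phi|=2K\ge0$.

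Third, I run the dichotomy. If $K\equiv0$, we are in case (1). Otherwise there is a point $p$ with $K(p)>0$. A complete surface with $K\ge0$ is parabolic (by Huber and Blanc--Fiala--Cohn-Vossen), so I work on its conformal universal cover, which is $\mathbb{C}$ or the sphere. Suppose $Q^{(2,0)}\not\equiv0$. On the sphere, a holomorphic quadratic differential must vanish, a contradiction. On $\mathbb{C}$, I write $Q^{(2,0)}=f\,dz^2$ with $f$ entire and the metric as $e^{2u}|dz|^2$, where $u$ is superharmonic because $K\ge0$. Then $|\phi|\sim|f|e^{-2u}$, and $\log|\phi|$ is subharmonic with $\Delta\log|\phi|=2K$.

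The main obstacle is getting a contradiction from $K(p)>0$. My plan is to use the parabolicity of $\Sigma$ together with a boundedness argument. The Gauss equation, with $|H|$ constant and $K\ge0$, bounds $|\phi|$ in terms of $|H|$ and $c$. The function $\log|\phi|$ is then a subharmonic function that is bounded above on a parabolic surface, so it is constant. That forces $K\equiv0$ away from the isolated zeros, contradicting $K(p)>0$. Hence $Q^{(2,0)}\equiv0$, which is case (2). The delicate part is justifying the upper bound on $|\phi|$, since $S$ involves $A_H$, which is controlled by the Gauss equation through $K\ge0$ and the bounded $\varphi$ and $\eta$ terms. It is also delicate to pass to the universal cover when $\Sigma$ has nontrivial topology, where I will use the lifted metric, which is still complete with $K\ge0$.
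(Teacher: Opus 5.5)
Your proposal is correct and follows essentially the paper's route: the traceless operator built from $Q$ is Codazzi because $Q^{(2,0)}$ is holomorphic, and the Cheng--Yau formula gives $\frac12\Delta|S|^2=2K|S|^2+|\nabla S|^2$. As in the paper, the Gauss equation with $K\geq 0$ (using $\det A_\alpha\leq 0$ for the traceless $A_\alpha$, $\alpha>3$) bounds $|S|$, and Huber's parabolicity then forces the bounded subharmonic function to be constant. The only differences are cosmetic: you run the Liouville step on $\log|\phi|$, possibly on the universal cover, where the paper uses the smooth function $|S|^2$ directly, which is simpler and avoids the zeros; and the $3c$ term in your $S$ has a sign slip, since $\langle\varphi X,H\rangle\langle\varphi Y,H\rangle=\langle X,tH\rangle\langle Y,tH\rangle$.
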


\begin{proof}
Let $M$ be an $m$-dimensional Riemannian manifold, and consider a Codazzi tensor, i.e., a symmetric operator $S$ on $M$, that satisfies the classical Codazzi equation $(\nabla_XS)Y=(\nabla_YS)X$, where $\nabla$ is the Levi-Civita connection on the manifold. We will need the following equation (equation $2.8$ in \cite{CY}), which generalizes J. Simons' result in \cite{JS}
\begin{equation}\label{delta}
\frac{1}{2}\Delta|S|^2=|\nabla S|^2+\sum_{i=1}^{m}\lambda_i(\trace S)_{ii}+\frac{1}{2}\sum_{i,j=1}^{m}R_{ijij}(\lambda_i-\lambda_j)^2,
\end{equation}
where $\lambda_i$, $1\leq i\leq m$, are the eigenvalues of $S$, and $R_{ijkl}$ are the components of the Riemannian curvature of $M$.

Now, let us consider an operator $S$, defined on $\Sigma^2$ by
\begin{eqnarray}\label{eq:S}
S&=&8|H|^2A_H+3c\langle \varphi H,\cdot\rangle tH-c|H|^2\eta(\cdot)T\\\nonumber&&-\left(8|H|^4+\frac{3c}{2}|tH|^2-\frac{c}{2}|H|^2|T|^2\right)\id.
\end{eqnarray}
We shall see in the following that $|S|^2$ is a bounded subharmonic function on the surface.

First, note that $S$ is symmetric and traceless, as it has the property
\begin{equation}\label{eq:SQ}
\langle SX,Y\rangle=Q(X,Y)-\frac{\trace Q}{2}\langle X,Y\rangle.
\end{equation}
Moreover, $Q^{(2,0)}$ vanishes on $\Sigma^2$ if and only if $S=0$ on the surface.

From \eqref{eq:SQ}, since $Q^{(2,0)}$ is holomorphic, it follows, as in \cite[Proposition 3.3]{B}, that $S$ is a Codazzi tensor. Therefore, from equation \eqref{delta} and $\trace S=0$, one obtains the following Simons type equation
\begin{equation}\label{eq:Simons}
\frac{1}{2}\Delta|S|^2=2K|S|^2+|\nabla S|^2.
\end{equation}

Next, consider the local orthonormal frame field $\{E_3=H/|H|,E_4,\ldots,E_{2n+1}\}$ in the normal bundle and $A_{\alpha}=A_{E_{\alpha}}$. We have $\trace A_3=2|H|$ and $\trace A_{\alpha}=0$, $\alpha>3$.

A straightforward computation, using the definition \eqref{eq:S} of $S$, leads to
\begin{eqnarray*}
\det A_3&=&|H|^2-\frac{1}{128|H|^6}|S|^2-\frac{9c^2}{256|H|^6}|tH|^2+\frac{3c}{64|H|^6}\langle S(tH),tH\rangle\\&&-\frac{c}{64|H|^4}\langle ST,T\rangle-\frac{c^2}{256|H|^2}|T|^4+\frac{3c^2}{128|H|^4}|T|^2|tH|^2.
\end{eqnarray*}

From \eqref{eq:R_product} one obtains the expression of the Gaussian curvature of $\Sigma^2$
\begin{equation}\label{K}
K=\frac{c}{4}(1+3\langle E_1,\varphi E_2\rangle-|T|^2)+\det A_3+\sum_{\alpha>3}\det A_{\alpha},
\end{equation}
where $\{E_1,E_2\}$ is a local orthonormal positively oriented frame field on the surface.

Since $\trace A_{\alpha}=0$, it follows that $\det A_{\alpha}\leq 0$, for all $\alpha>3$. Therefore, as $K\geq 0$, we have the inequality
$$
\frac{c}{4}(1+3\langle E_1,\varphi E_2\rangle^2-|T|^2)+\det A_3\geq 0.
$$
We also have $|tH|^2\leq|H|^2$, $c|\langle S(tH),tH\rangle|\leq(|c|/\sqrt{2})|tH||S|\leq(|c|/\sqrt{2})|H||S|$, and $c|\langle S(T),T\rangle|\leq(|c|/\sqrt{2})|T||S|\leq(|c|/\sqrt{2})|S|$.

In the following we shall prove that $|S|$ is bounded. We have two cases as $c<0$ or $c>0$.

If $c<0$, then $\det A_3\geq 0$ and therefore
$$
-\frac{1}{128|H|^4}|S|^2-\frac{4c}{64\sqrt{2}|H|^2}|S|+\frac{3c^2}{128}+|H|^4\geq 0,
$$
which leads to $|S|\leq(\sqrt{22\rho^2+256|H|^4}-4c)|H|^2/\sqrt{2}$.

If $c>0$, we have $\det A_3+c|H|^2\geq 0$, which implies
$$
-\frac{1}{128|H|^4}|S|^2+\frac{4c}{64\sqrt{2}|H|^2}|S|+\frac{3c^2}{128}+|H|^4+c|H|^2\geq 0,
$$
and then $|S|\leq(\sqrt{22c^2+256c|H|^2+256|H|^4}+4c)|H|^2/\sqrt{2}$.

The surface $\Sigma^2$ is complete and has non-negative Gaussian curvature, therefore it is a parabolic space (\cite{H}). Then, since $|S|^2$ is a bounded subharmonic function, from \eqref{eq:Simons}, it follows that $|S|$ is a constant and either $K=0$ on $\Sigma^2$ or there exists a point $p\in\Sigma^2$ such that $K(p)>0$ and $S=0$ on the surface.
\end{proof}

\begin{remark}\label{r_thm} Any surface satisfying the hypotheses of Theorem \ref{thm} also has the properties $|S|=\cst$ and $\nabla S=0$.
\end{remark}

Next, let us consider the case of PMC anti-invariant surfaces $\Sigma^2$. For such surfaces, we have one more holomorphic differential $Q'^{(2,0)}$, the traceless part of the quadratic form
$$
Q'(X,Y)=8\langle\sigma(X,Y),H\rangle-c\eta(X)\eta(Y),
$$
defined on $\Sigma^2$ (see \cite{FR-TAMS}).

Working exactly as in the general case, one can show that if a PMC anti-invariant surface has non-negative Gaussian curvature $K$, then either the surface is flat, or there exists a point $p$ on the surface such that $K(p)>0$ and, in this case, $Q'^{(2,0)}$ vanishes on $\Sigma^2$. Since also $Q^{(2,0)}$ vanishes on $\Sigma^2$, we use \cite[Corollaries 6.5, 6.10]{FR-TAMS} to conclude with the following theorem.

\begin{theorem}\label{thm_surfaces}
Let $\Sigma^2$ be a PMC anti-invariant surface in $N^n(c)\times\mathbb{R}$, $c\neq 0$, with non-negative Gaussian curvature. Then we have:
\begin{enumerate}

\item if $n\geq 2$, then $\Sigma^2$ is either flat, or pseudo-umbilical, or it lies in a product space $\bar N^4(c/4)\times\mathbb{R}$, where $\bar N^4(c/4)$ is a real space form with constant sectional curvature $c/4$ immersed as a totally geodesic totally real submanifold in $N^n(c)$;

\item if $n=2$, then $\Sigma^2$ is one of the embedded rotationally invariant CMC spheres $S_H^2\subset\bar N^2(c/4)\times\mathbb{R}$.

\end{enumerate}
\end{theorem}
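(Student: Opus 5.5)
The argument runs in parallel with the proof of Theorem \ref{thm}, now using both holomorphic differentials, and then reduces to the classification in \cite{FR-TAMS}. Since $\Sigma^2$ is anti-invariant, $tH=0$ and $\langle\varphi X,H\rangle=0$ for all tangent $X$. Hence $Q=|H|^2Q'$ on $\Sigma^2$, and the operator $S$ in \eqref{eq:S} becomes
$$
S=8|H|^2A_H-c|H|^2\eta(\cdot)T-\left(8|H|^4-\frac{c}{2}|H|^2|T|^2\right)\id=|H|^2S',
$$
where $S'$ is the traceless operator associated with $Q'$ through the analogue of \eqref{eq:SQ}. If $H=0$ there is nothing to prove in the non-flat/pseudo-umbilical alternative: a minimal surface is trivially pseudo-umbilical. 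So I assume $H\neq0$, which makes $|H|$ a nonzero constant.

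Next I run the Simons argument for $S'$. Because $Q'^{(2,0)}$ is holomorphic, $S'$ is a Codazzi tensor, exactly as in \cite[Proposition 3.3]{B}. Equation \eqref{delta} then gives $\frac12\Delta|S'|^2=2K|S'|^2+|\nabla S'|^2$. Boundedness of $|S'|$ follows from $K\ge0$ via the same estimate of $\det A_3$ used above, now simpler because $tH=0$. Completeness is needed here. It is implicit in the statement; I would assume it, as in Theorem \ref{thm}, and otherwise localize. Together with parabolicity \cite{H}, this shows that either $K\equiv0$, so $\Sigma^2$ is flat, or $K(p)>0$ at some point and $S'\equiv0$. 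In the latter case both $Q^{(2,0)}$ and $Q'^{(2,0)}$ vanish.

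It remains to classify PMC anti-invariant surfaces with $Q'^{(2,0)}=0$, which I do with \cite[Corollaries 6.5, 6.10]{FR-TAMS}. Vanishing of $Q'^{(2,0)}$ means $A_H=|H|^2\id+\frac{c}{8}(\langle T,\cdot\rangle T-\frac{|T|^2}{2}\id)$. If $T=0$ on an open set, this is pseudo-umbilicity. Otherwise, when $n\ge2$, Corollary 6.5 of \cite{FR-TAMS} reduces the codimension: the surface lies in $\bar N^4(c/4)\times\mathbb{R}$ with $\bar N^4$ totally geodesic and totally real. When $n=2$, the ambient $\bar N^2(c/4)\times\mathbb{R}$ is a $3$-dimensional product, so $\Sigma^2$ is a CMC surface with vanishing Abresch--Rosenberg differential, and Corollary 6.10 of \cite{FR-TAMS} identifies it with the rotational spheres $S^2_H$.

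The main obstacle is matching the hypotheses of those corollaries, in particular checking that the dichotomy between $T\equiv0$ and the codimension reduction holds globally and not just on open sets. I would handle this by analyticity of PMC surfaces, or by the connectedness arguments used in \cite{FR-TAMS}. A second point to verify is whether the flat alternative is genuinely excluded for $n=2$. If $K\equiv0$, then $|S'|$ constant with $\nabla S'=0$ has to be combined with the explicit form of $K$ in \eqref{K}; I expect to rely on \cite{FR-TAMS} for that case rather than prove it from scratch.
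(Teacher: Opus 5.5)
Your overall strategy is the paper's. A Simons argument for the Codazzi operator $S'$ attached to $Q'$ gives either flatness or $Q'^{(2,0)}\equiv 0$. This is combined with $Q^{(2,0)}\equiv 0$, and Corollaries 6.5 and 6.10 of Fetcu--Rosenberg finish the argument.

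However, the way you obtain $Q^{(2,0)}\equiv 0$ rests on a false claim. Anti-invariance says that $\varphi X$ is normal for $X$ tangent. Since $H$ is also normal, nothing forces $\langle\varphi X,H\rangle=-\langle X,tH\rangle$ to vanish. For example, a non-minimal Lagrangian surface in a slice $\mathbb{C}P^2\times\{t_0\}$ is anti-invariant, yet $\varphi H=JH$ is tangent, so $tH=JH\neq 0$. Hence $Q\neq|H|^2Q'$ and $S\neq|H|^2S'$ in general. Instead,
\[
Q(X,Y)-|H|^2Q'(X,Y)=3c\,\langle tH,X\rangle\langle tH,Y\rangle ,
\]
and the $(2,0)$-part of the right-hand side vanishes exactly when the traceless part of $tH\otimes tH$ does, i.e.\ when $tH=0$. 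So the simultaneous vanishing of $Q^{(2,0)}$ and $Q'^{(2,0)}$ is not automatic. It is precisely what forces $\varphi H$ to be normal, which is the extra information the two differentials supply to the Fetcu--Rosenberg corollaries. Your argument assumes this instead of proving it.

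The repair is what the paper does. In the non-flat branch, obtain $Q^{(2,0)}\equiv 0$ independently from Theorem~\ref{thm}, which applies to every complete non-minimal PMC surface with $K\geq 0$. You have already set aside $H=0$, and completeness must be added, as you note. The rest of your Simons argument for $S'$ is unaffected: $S'$ does not involve $tH$ at all. Moreover, since $\langle E_1,\varphi E_2\rangle=0$ for anti-invariant surfaces, the inequality $0\leq K\leq \frac{c}{4}(1-|T|^2)+\det A_3$ bounds $|S'|$ with no hypothesis on $tH$. Your concern about the flat alternative when $n=2$ is legitimate. The paper's argument does not address it either; it simply invokes the two corollaries.
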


\begin{remark} The geometry of the CMC spheres $S_H^2$ in $\bar N^2(c/4)\times\mathbb{R}$ was first studied in \cite{H-H,P-R}, before being thoroughly described in \cite{AR,AR2}.
\end{remark}

\section{A Simons type formula for PMC submanifolds in complex space forms}

As we have seen in the previous section, when dealing with surfaces in complex space forms or in product spaces, one has other, more suitable, options than working directly with the shape operator, which is not, in this context, a Codazzi tensor. However, when the dimension of the submanifold is higher, this seems to remain the best option in order to develop Simons type formulas. In complex space forms, such equations have been already obtained for PMC totally real surfaces (see \cite{Chen74}) and PMC Lagrangian submanifolds (see \cite{Chen77}), i.e., totally real submanifolds with maximum dimension. In the following we compute a similar formula in the general case of a PMC submanifold.

Let $\Sigma^m$, $m\leq 2n-1$, be an $m$-dimensional submanifold of a complex space form $N^n(c)$, with mean curvature vector field $H$. Assume that there exists a normal vector field $V$, such that $V$ is parallel in the normal bundle and $\trace A_V=\cst$. In this section, we shall compute the Laplacian of the squared norm of $A_V$, where $A$ is the shape operator of $\Sigma^m$ in $N^n(c)$.

First, from the Codazzi equation of the submanifold,
\begin{eqnarray*}
\langle \bar R(X,Y)Z,V\rangle&=&\langle\nabla^{\perp}_X\sigma(Y,Z),V\rangle-\langle\sigma(\nabla_XY,Z),V\rangle
-\langle\sigma(Y,\nabla_XZ),V\rangle\\&&-\langle\nabla^{\perp}_Y\sigma(X,Z),V\rangle+\langle\sigma(\nabla_YX,Z),V\rangle
+\langle\sigma(X,\nabla_YZ),V\rangle,
\end{eqnarray*}
where $\sigma$ is the second fundamental form of $\Sigma^m$, we have
\begin{equation*}
\langle \bar R(X,Y)Z,V\rangle=\langle(\nabla_XA_V)Y-(\nabla_YA_V)X,Z\rangle,
\end{equation*}
since $\nabla^{\perp}V=0$. Therefore, using \eqref{eq:curv_cpn}, we obtain
\begin{equation}\label{eq:Codazzi}
(\nabla_XA_V)Y=(\nabla_YA_V)X+\frac{c}{4}(\langle JX,V\rangle tY-\langle JY,V\rangle tX-2\langle X,JY\rangle tV)
\end{equation}
where $JX=tX+nX$, with $tX$ tangent to $\Sigma^m$ and $nX$ normal.

Next, the Laplacian of $|A_V|^2$ is given by the Weitzenb\"ock formula
\begin{equation}\label{eq:Laplacian}
\frac{1}{2}\Delta|A_V|^2=|\nabla A_V|^2+\langle\trace\nabla^2A_V,A_V\rangle,
\end{equation}
and the second term in the right hand side of this equation can
be computed by using a classical technique introduced in \cite[Section 1]{NS}.

Let us denote
$$
C(X,Y)=(\nabla^2 A_V)(X,Y)=\nabla_X(\nabla_YA_V)-\nabla_{\nabla_XY}A_V,
$$
and then the Ricci commutation formula reads as
\begin{equation}\label{eq:C} 
C(X,Y)=C(Y,X)+[R(X,Y),A_V].
\end{equation}
Now, consider an orthonormal basis $\{e_i\}_{i=1}^{m}$ in
$T_p\Sigma^m$, $p\in\Sigma^m$, and extend the $e_i$ to vector
fields $E_i$ in a neighborhood of $p$ such that $\{E_i\}$ is a
geodesic frame field around $p$, and denote $X=E_k$. We have
$$
(\trace\nabla^2A_V)X=\sum_{i=1}^mC(E_i,E_i)X.
$$

From equation \eqref{eq:Codazzi}, we have, at $p$, 
\begin{equation*}
C(E_i,X)E_i=\nabla_{E_i}((\nabla_{E_i}A_V)X)+\frac{c}{4}\nabla_{E_i}(\langle JX,V\rangle tY-\langle JY,V\rangle tX-2\langle X,JY\rangle tV)
\end{equation*}
and then, after a straightforward computation, using $\bar\nabla J=0$ and $\trace(tA_V)=0$, one obtains
\begin{eqnarray}\label{eq:1}
\sum_{i=1}^m C(E_i,X)E_i&=&\sum_{i=1}^m C(E_i,E_i)X-\frac{c}{4}\big(m\langle JX,V\rangle tH-t(A_{nV}X)\\\nonumber &&-t(A_V(tX))-m\langle JH,V\rangle tX+t(\sigma(X,tV))+A_{nX}(tV)\\\nonumber &&-2m\langle JH,X\rangle tV-2t(A_V(tX))+2A_{nV}(tX)\\\nonumber &&+\sum_{i=1}^m\left(\langle JX,V\rangle A_{nE_i}E_i-2\langle A_{nE_i}E_i,X\rangle tV\right)\big).
\end{eqnarray}

Still at $p$, $C(X,E_i)E_i=\nabla_{X}((\nabla_{E_i}A_V)E_i)$,
and then, from \eqref{eq:C}, it follows that
\begin{equation}\label{eq:new2}
C(E_i,X)E_i=\nabla_{X}((\nabla_{E_i}A_V)E_i)+[R(E_i,X),A_V]E_i.
\end{equation}

Since $\nabla_{E_i}A_V$ is symmetric, from \eqref{eq:Codazzi}, one obtains
\begin{eqnarray*}
\sum_{i=1}^m\langle(\nabla_{E_i}A_V)E_i,Z\rangle&=&\sum_{i=1}^m\langle E_i,(\nabla_{E_i}A_V)Z\rangle=\sum_{i=1}^m\langle E_i,(\nabla_{Z}A_V)E_i\rangle\\\nonumber &&+\frac{c}{4}\sum_{i=1}^m\langle E_i,\langle JE_i,V\rangle tZ-2\langle E_i,JZ\rangle tV\rangle\\\nonumber &=&Z(\trace A_V)+\frac{3c}{4}\langle t^2V,Z\rangle\\\nonumber &=&\frac{3c}{4}\langle t^2V,Z\rangle,
\end{eqnarray*}
for any vector $Z$ tangent to $\Sigma^m$, since $\trace A_V=\cst$. 

Next, using $\bar\nabla_X(J(tV))=J\bar\nabla_X(tV)$, this equation leads to
\begin{equation}\label{eq:cx}
\sum_{i=1}^m\nabla_{X}((\nabla_{E_i}A_V)E_i)=\frac{3c}{4}\left(tA_{nV}X-t^2A_VX+A_{n(tV)}X+t(\sigma(X,tV))\right).
\end{equation}

From the Gauss equation \eqref{eq:R_cpn}, after a straightforward
computation, one obtains
\begin{eqnarray}\label{R}
\sum_{i=1}^m[R(E_i,X),A_V]E_i&=&\frac{c}{4}(mA_VX+3t(A_V(tX))-3A_V(t^2X)\\\nonumber &&-(\trace A_V)X)\\\nonumber &&+\sum_{\alpha=m+1}^{2n}(A_{\alpha}A_VA_{\alpha}X-(\trace(A_VA_{\alpha})A_{\alpha}X\\\nonumber &&-A_VA_{\alpha}^2X+(\trace A_{\alpha})A_VA_{\alpha}X)
\end{eqnarray}

Finally, from \eqref{eq:1}, \eqref{eq:new2}, \eqref{eq:cx}, and \eqref{R}, we prove, also using \eqref{eq:Laplacian}, the following theorem.

\begin{theorem}\label{p:delta} Let $\Sigma^m$ be a submanifold of $N^n(c)$. If $V$ is a normal vector field parallel in the normal bundle, with $\trace A_V=\cst$, then
\begin{eqnarray}\label{eq:delta2}
\frac{1}{2}\Delta|A_V|^2&=&|\nabla A_V|^2+\frac{c}{4}\{m|A_V|^2+6|tA_V|^2+6\trace(A_V(tA_{nV}))\\\nonumber &&+3\trace(A_VA_{n(tV)})-(\trace A_V)^2+6\trace(A_V(t(A_Vt)))\\\nonumber &&+3\trace(\langle t(\sigma(A_V\cdot,tV)),\cdot\rangle)-3\trace(\langle t(\sigma(A_V(tV),\cdot),\cdot\rangle)\\\nonumber &&+3m\langle A_V(tH),tV\rangle\}+\sum_{\alpha=2m+1}^{2n}\{\trace(A_VA_{\alpha}A_VA_{\alpha}-A_V^2A_{\alpha}^2)\\\nonumber &&+\trace(A_{\alpha})\trace(A_V^2A_{\alpha})-(\trace(A_VA_{\alpha}))^2\},
\end{eqnarray}
where $\{E_{\alpha}\}_{\alpha=m+1}^{n+1}$ is a local orthonormal frame field in the normal bundle.
\end{theorem}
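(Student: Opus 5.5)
The plan is to assemble the pieces already computed: \eqref{eq:Laplacian} reduces everything to computing $\langle\trace\nabla^2A_V,A_V\rangle=\sum_k\langle\sum_i C(E_i,E_i)E_k,A_VE_k\rangle$ at a point $p$, with $\{E_i\}$ a geodesic frame around $p$. Solving \eqref{eq:1} for $\sum_iC(E_i,E_i)X$ and substituting \eqref{eq:new2} gives
\begin{equation*}
\sum_iC(E_i,E_i)X=\sum_i\nabla_X((\nabla_{E_i}A_V)E_i)+\sum_i[R(E_i,X),A_V]E_i+\frac{c}{4}\,\mathcal{B}(X),
\end{equation*}
where $\mathcal{B}(X)$ is the bracketed expression in \eqref{eq:1}. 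The first term is replaced by \eqref{eq:cx} and the second by \eqref{R}. Then I take $X=E_k$, pair with $A_VE_k$ and sum over $k$, turning every term into a trace.

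The curvature-free part of \eqref{R} gives the standard Simons quartic terms: $\sum_\alpha\{\trace(A_VA_\alpha A_VA_\alpha-A_V^2A_\alpha^2)+\trace A_\alpha\trace(A_V^2A_\alpha)-(\trace(A_VA_\alpha))^2\}$. This follows directly from the symmetry of $A_V$ and $A_\alpha$. The $c/4$ part of \eqref{R} gives $m|A_V|^2-(\trace A_V)^2$, together with $3\trace(A_V t A_V t)$ and $-3\trace(A_V^2t^2)$.

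Next I rewrite every $t$-term using the algebraic identities that follow from $J^2=-\id$ and $J$ being orthogonal and skew. These are: $t$ is skew-symmetric on $T\Sigma$; $t^2X=-X-t_N(nX)$, where $t_N$ is the tangent part of $J$ restricted to normal vectors; $\langle nX,\xi\rangle=-\langle X,t_N\xi\rangle$; and $\langle A_{nX}Y,Z\rangle=\langle\sigma(Y,Z),nX\rangle$. With these, terms such as $-\trace(A_V^2t^2)$ combine with the \eqref{eq:cx} term $-\tfrac{3c}{4}\trace(A_Vt^2A_V)$ into multiples of $|tA_V|^2$. The mixed terms $\trace(A_VtA_{nV})$, $\trace(A_VA_{n(tV)})$ and $\trace\langle t\sigma(A_V\cdot,tV),\cdot\rangle$ are collected from \eqref{eq:cx} and $\mathcal{B}$. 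Terms involving $\sum_iA_{nE_i}E_i$ and $tH$ become, after pairing with $A_V$ and using $\trace A_V=\cst$ and the skewness of $t$, the term $3m\langle A_V(tH),tV\rangle$ plus pieces that cancel. Some contributions vanish outright, such as $\trace(tA_V)=0$ and $\langle tX,A_VX\rangle$-type traces of a skew times a symmetric operator.

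The main obstacle is bookkeeping the $\mathcal{B}$ terms: matching coefficients so that they combine to exactly $6|tA_V|^2$, $6\trace(A_VtA_{nV})$, $3\trace(A_VA_{n(tV)})$ and $6\trace(A_Vt(A_Vt))$, with the correct signs. To keep this manageable I would first verify each identity on the Lagrangian case, where $t=0$ and $nV$ is tangent-valued, and compare with \cite{Chen77}. This reduces the check to the terms that are new in the general case, which I would then handle one at a time using the identities above.
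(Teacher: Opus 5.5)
Your strategy is the paper's: the Weitzenb\"ock formula \eqref{eq:Laplacian}, then \eqref{eq:1}, \eqref{eq:new2}, \eqref{eq:cx} and \eqref{R} assembled, paired with $A_VE_k$ and traced. However, the one identity you write down has the wrong sign, so the coefficient matching you plan cannot succeed.

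\textbf{The sign.} At $p$, the geodesic frame gives $C(E_i,X)E_i=\nabla_{E_i}((\nabla_XA_V)E_i)$ and $\nabla_{E_i}((\nabla_{E_i}A_V)X)=C(E_i,E_i)X$. Applying \eqref{eq:Codazzi} to $(\nabla_XA_V)E_i$ then yields $\sum_iC(E_i,X)E_i=\sum_iC(E_i,E_i)X+\frac c4\mathcal{B}(X)$. This is exactly what the displayed line just before \eqref{eq:1} says; the minus sign in the printed \eqref{eq:1} is a misprint. Hence
\[
\sum_iC(E_i,E_i)X=\sum_i\nabla_X((\nabla_{E_i}A_V)E_i)+\sum_i[R(E_i,X),A_V]E_i-\frac c4\,\mathcal{B}(X).
\]

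\textbf{Why it matters.} After pairing with $A_VE_k$ and summing, the contributions inside the $c/4$-bracket are as follows. From \eqref{R} you get $m|A_V|^2+3|tA_V|^2+3\trace(A_VtA_Vt)-(\trace A_V)^2$. From \eqref{eq:cx} you get $3|tA_V|^2+3\trace(A_VtA_{nV})+3\trace(A_VA_{n(tV)})+3\trace\langle t\sigma(A_V\cdot,tV),\cdot\rangle$. The pairing of $\mathcal{B}$ itself is
\[
-3m\langle A_V(tH),tV\rangle-3\trace(A_VtA_{nV})-3\trace(A_VtA_Vt)+3\trace\langle t\sigma(A_V(tV),\cdot),\cdot\rangle .
\]
Subtracting this last expression reproduces \eqref{eq:delta2} exactly. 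Adding it, as your sign prescribes, gives coefficient $0$ for both $\trace(A_VtA_{nV})$ and $\trace(A_VtA_Vt)$, and the opposite signs on the last two $c/4$-terms.

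That version is false. Take the geodesic sphere of radius $\pi/4$ in $\mathbb{C}P^2$: principal curvatures $1,1,0$ with $0$ on $J\nu$, $|\nabla A|^2=4$, $|tA|^2=2$, $tA=At$. The correct formula gives $0$, while your sign gives $\tfrac12\Delta|A|^2=12$, even though $|A|^2$ is constant.

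\textbf{Where the terms come from.} Your description of $\mathcal{B}$ is off.
\begin{itemize}
\item $\mathcal{B}$ contributes nothing to $|tA_V|^2$ or to $\trace(A_VA_{n(tV)})$; these come only from \eqref{R} and \eqref{eq:cx}.
\item Inside $\mathcal{B}$, $t(\sigma(X,tV))$ cancels against $A_{nX}(tV)$, and $\langle JH,V\rangle tX$ drops out by skewness of $t$.
\item The $\sum_iA_{nE_i}E_i$ terms do not cancel. Since $\langle\sum_iA_{nE_i}E_i,Z\rangle=-\trace\langle t\sigma(Z,\cdot),\cdot\rangle$, they are the source of the $\trace\langle t\sigma(A_V(tV),\cdot),\cdot\rangle$ term.
\end{itemize}

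\textbf{Your Lagrangian check.} It cannot detect this sign. For any totally real submanifold, $t|_{T\Sigma}=0$ kills the $\trace(A_VtA_{nV})$ and $\trace(A_VtA_Vt)$ parts. Total symmetry of $\langle\sigma(X,Y),JZ\rangle$ gives $\trace\langle t\sigma(A_V(tV),\cdot),\cdot\rangle=m\langle A_V(tH),tV\rangle$, so the remaining terms cancel and the entire $\mathcal{B}$-contribution vanishes. You need a test case with $t\neq0$ on $T\Sigma$, such as a real hypersurface.

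\textbf{Index range.} The quartic sum must run over all normal indices $\alpha=m+1,\dots,2n$, as \eqref{R} shows. The lower limit $2m+1$ in the statement is a misprint.
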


\begin{remark} As shown in \cite[Section 1]{NS}, if at a point $p\in\Sigma^m$ we consider a basis $\{X_i\}_{i=1}^{m}$ that diagonalizes $A_V$, one can prove that
$$
\sum_{i,j=1}^m\langle[R(X_i,X_j),A_V]X_i,A_VX_j\rangle=\sum_{i<j}(\lambda_i-\lambda_j)^2R_{ijij},
$$
where $\lambda_i$ are the eigenfunctions of $A_V$, and then equation \eqref{eq:delta2} can be alternatively written as
\begin{eqnarray}\label{eq:delta1}
\frac{1}{2}\Delta|A_V|^2&=&|\nabla A_V|^2+\frac{c}{4}\{3|tA_V|^2+6\trace(A_V(tA_{nV}))\\\nonumber &&+3\trace(A_VA_{n(tV)})+3\trace(A_V(t(A_Vt)))\\\nonumber &&+3\trace(\langle t(\sigma(A_V\cdot,tV)),\cdot\rangle)-3\trace(\langle t(\sigma(A_V(tV),\cdot),\cdot\rangle)\\\nonumber &&+3m\langle A_V(tH),tV\rangle\}+\sum_{i<j}(\lambda_i-\lambda_j)^2R_{ijij}.
\end{eqnarray}
\end{remark}

\section{Real hypersurfaces in complex space forms}

When dealing with hypersurfaces, the Simons type equations \eqref{eq:delta2} and \eqref{eq:delta1} simplify. In this section we consider real hypersurfaces $\Sigma^{2n-1}$ in a complex space form $N^n(c)$. Such a hypersurface has a unit normal vector field $\nu$ that is parallel in the normal bundle. 

\begin{proposition}\label{hyper_complex} If $\Sigma^{2n-1}$ is a CMC real hypersurface in a complex space form $N^n(c)$, then
\begin{eqnarray}\label{eq:hyper2}
\frac{1}{2}\Delta|A|^2&=&|\nabla A|^2-|A|^4+(2n-1)|H|\trace A^3+\frac{c}{4}\{(2n-4)|A|^2\\\nonumber &&+3|tA-At|^2+3(2n-1)|H|\langle A(J\nu),J\nu\rangle-(2n-1)^2|H|^2\}.
\end{eqnarray}

\end{proposition}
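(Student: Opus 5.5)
Our plan is to specialize Theorem~\ref{p:delta}, in the form \eqref{eq:delta2}, to $V=\nu$, the unit normal of the real hypersurface $\Sigma^{2n-1}$, with $m=2n-1$. The normal bundle is a line bundle spanned by $\nu$, so $\nu$ is parallel in the normal bundle. Moreover $H=|H|\nu$ (up to orientation) and $\trace A=(2n-1)|H|$, which is constant because $\Sigma$ is CMC. Hence the hypotheses of Theorem~\ref{p:delta} hold and $A_V=A$. The argument then reduces to rewriting each term of \eqref{eq:delta2} in this codimension-one setting.

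\textbf{Normal-bundle sum.} There is only one normal direction, $A_\alpha=A$, and the sum becomes
\[
\trace(A^4-A^4)+\trace A\,\trace A^3-(\trace A^2)^2=(2n-1)|H|\trace A^3-|A|^4,
\]
which gives the two non-curvature terms of \eqref{eq:hyper2}.

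\textbf{Decomposition of $J$.} Here $J\nu$ is a unit tangent vector, so $t\nu=J\nu$ and $n\nu=0$. For tangent $X$ we have $nX=\langle JX,\nu\rangle\nu=-\langle X,J\nu\rangle\nu$ and $tX=JX+\langle X,J\nu\rangle\nu$. Consequently $t$ is skew-symmetric on $T\Sigma$ and $t^2X=-X+\langle X,J\nu\rangle J\nu$.

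\textbf{Simplifying the curvature bracket.} We substitute these relations into the bracket multiplied by $c/4$:
\begin{itemize}
\item Since $n\nu=0$, the terms $\trace(A(tA_{n\nu}))$ vanish.
\item Since $n(t\nu)=n(J\nu)=-\nu$, we get $A_{n(tV)}=-A$, so $3\trace(A A_{n(tV)})=-3|A|^2$.
\item The terms $3\trace\langle t(\sigma(A\cdot,t\nu)),\cdot\rangle$ and its companion contain $t$ applied to normal vectors $\sigma(\cdot,\cdot)=\langle A\cdot,\cdot\rangle\nu$. This gives $\langle A\cdot,J\nu\rangle\,t\nu=\langle A\cdot,J\nu\rangle J\nu$. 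We then take traces, which produces expressions in $|AJ\nu|^2$ and $\langle A^2J\nu,J\nu\rangle$.
\item The last term is $3m\langle A(tH),tV\rangle=3(2n-1)|H|\langle A(J\nu),J\nu\rangle$.
\item Using skew-symmetry, $|tA-At|^2=2|tA|^2-2\trace(AtAt)$, and $|tA|^2=\trace(A^2(-t^2))=|A|^2-|AJ\nu|^2$.
\item We must also check that $\trace(A(t(At)))=\trace(AtAt)$.
\end{itemize}

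\textbf{Collecting terms.} Combining $m|A|^2-3|A|^2=(2n-4)|A|^2$ with $-(\trace A)^2=-(2n-1)^2|H|^2$, we expect the remaining pieces to assemble into $3|tA-At|^2$.

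\textbf{Main obstacle.} The delicate step is this final assembly. The coefficients $6|tA|^2$ and $6\trace(AtAt)$ in \eqref{eq:delta2} must combine with the $\sigma$-terms and the $|AJ\nu|^2$ corrections so that exactly $3|tA-At|^2=6|tA|^2-6\trace(AtAt)$ survives and all $|AJ\nu|^2$ and $\langle A^2J\nu,J\nu\rangle$ contributions cancel. This requires careful sign bookkeeping in the convention for $\trace(A(t(At)))$, in particular whether it equals $-\trace(AtAt)$ because of the skew-symmetry of $t$.

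If the bookkeeping in \eqref{eq:delta2} turns out to be ambiguous, we will instead verify the result independently. We would use the alternative form \eqref{eq:delta1} with $R_{ijij}$ computed from \eqref{eq:R_cpn}, or redo the Nomizu–Smyth computation directly using the hypersurface Codazzi equation
\[
(\nabla_XA)Y-(\nabla_YA)X=-\frac c4\bigl(\langle X,J\nu\rangle tY-\langle Y,J\nu\rangle tX-2\langle X,tY\rangle J\nu\bigr),
\]
where the structure is transparent.
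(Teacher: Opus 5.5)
Your route is the paper's: put $V=\nu$, $m=2n-1$ in \eqref{eq:delta2} and reduce term by term. Your individual reductions are all correct: the one-term normal sum giving $(2n-1)|H|\trace A^3-|A|^4$, $n\nu=0$, $3\trace(AA_{n(t\nu)})=-3|A|^2$, $-(\trace A)^2=-(2n-1)^2|H|^2$, and $3m\langle A(tH),t\nu\rangle=3(2n-1)|H|\langle A(J\nu),J\nu\rangle$. However, the proposal does not close the argument. The step you call the main obstacle is left as an expectation, and the identity you write for it is wrong. Since $t$ is skew-symmetric on $T\Sigma$ and $A$ is symmetric, $tA-At$ is symmetric, so
\[
|tA-At|^2=\trace\bigl((tA-At)^2\bigr)=2\trace(AtAt)-2\trace(t^2A^2)=2|tA|^2+2\trace(AtAt).
\]
Here $\trace(AtAt)$ is the trace of the composition $A\circ t\circ A\circ t$, which is exactly $\trace(A(t(At)))$. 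Your version $2|tA|^2-2\trace(AtAt)$ already fails for $A=\id$: the left side is $0$ and the right side is $4|t|^2=4(2n-2)$. Hence $6|tA|^2+6\trace(A(t(At)))=3|tA-At|^2$ exactly. There is no sign convention to sort out, and no need to substitute $|tA|^2=|A|^2-|AJ\nu|^2$. Following your formula literally would leave a spurious discrepancy of $12\trace(AtAt)$.

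The $\sigma$-terms cancel among themselves, not against any $|AJ\nu|^2$ correction. Since $t(\sigma(X,Y))=\langle AX,Y\rangle J\nu$, both $\trace\langle t(\sigma(A\cdot,J\nu)),\cdot\rangle$ and $\trace\langle t(\sigma(A(J\nu),\cdot)),\cdot\rangle$ equal $\langle A^2J\nu,J\nu\rangle=|AJ\nu|^2$, so their difference vanishes. (The two quantities you list are the same number, since $A$ is symmetric.) This is what the paper checks in an adapted frame with $E_1=-J\nu$, together with $|tA|^2+\trace(A(t(At)))=\frac12|tA-At|^2$. With these two facts the assembly is immediate and yields \eqref{eq:hyper2}. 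The fallback via \eqref{eq:delta1} or a direct Nomizu--Smyth computation is unnecessary. Note also that your displayed hypersurface Codazzi equation has the sign of the $\langle X,tY\rangle J\nu$ term reversed relative to \eqref{eq:Codazzi} with $V=\nu$.
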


\begin{proof} The vector field $E_1=-J\nu$ is tangent and therefore we can consider an orthonormal frame field $\{E_1,E_2,\ldots,E_{n-1},E_n=JE_2,\ldots,E_{2n-1}=JE_{n-1}\}$ on $\Sigma^{2n-1}$.

It is then easy to see that, for any $i\neq 1$, we have 
$$
\langle t(\sigma(AE_i,J\nu),E_i\rangle=\langle t(\sigma(E_i,A(J\nu)),E_i\rangle=0.
$$
Moreover, we also have 
$$
\langle t(\sigma(AE_1,J\nu),E_1\rangle=\langle t(\sigma(E_1,A(J\nu)),E_1\rangle=\langle\sigma(AE_1,E_1),\nu\rangle=|AE_1|^2
$$
and $|tA|^2=|At|^2=|A|^2-|AE_1|^2$, which implies that 
$$
|tA|^2+\trace(A(t(At)))=(1/2)|tA-At|^2.
$$ 
Since $n\nu=0$ and $n(t\nu)=n(J\nu)=-\nu$, we conclude by using equation \eqref{eq:delta2}.
\end{proof} 

\begin{remark} In the case of CMC real hypersurfaces, equation \eqref{eq:delta1} becomes
\begin{eqnarray}\label{eq:hyper1}
\frac{1}{2}\Delta|A|^2&=&|\nabla A|^2+\frac{c}{8}\{-6|A|^2+3|tA-At|^2+6(2n-1)|H|\langle A(J\nu),J\nu\rangle\}\\\nonumber &&+\sum_{i<j}(\lambda_i-\lambda_j)^2R_{ijij}.
\end{eqnarray}
\end{remark}

\begin{remark} Homogeneous real hypersurfaces in $\mathbb{C}P^n(c)$ were completely classified in \cite{T} and they are divided into five types $A-E$, with type $A$ hyperurfaces further subdivided as $A_1$ and $A_2$. In $\mathbb{C}H^n(c)$, these hypersurfaces were classified in \cite{BT} (see also \cite{NR} for further details on real hypersurfaces in complex space forms). 
\end{remark}

\begin{remark} We recall that the Ricci tensor $S$ of a real hypersurface is given by (see, for example, \cite{NR})
\begin{equation*}
SX=\langle SX,X\rangle=\frac{(2n+1)c}{4}X-\frac{3c}{4}\langle X,J\nu\rangle J\nu+(2n-1)|H|AX-A^2X
\end{equation*}
and then, for a unit vector field $X$ tangent to the hypersurface, we have
\begin{equation}\label{Ricci}
\ric X=\frac{(2n+1)c}{4}-\frac{3c}{4}\langle X,J\nu\rangle^2+(2n-1)|H|\langle AX,X\rangle-|AX|^2.
\end{equation}

\end{remark}

Henceforth, for the sake of simplicity, we will consider only the cases when $c=4$ or $c=-4$ and will denote the ambient spaces by $\mathbb{C}P^n$ or $\mathbb{C}H^n$, respectively.

\begin{proposition} There are no complete minimal real hypersurfaces in $\mathbb{C}P^n$ such that $|A|^2\leq k_0<2(n-1)$, where $k_0\geq 0$ is a real constant.
\end{proposition}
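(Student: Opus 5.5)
We argue by contradiction and apply the Omori--Yau maximum principle (Theorem \ref{OY}) to $u=|A|^2$, using the Simons type formula \eqref{eq:hyper2} of Proposition \ref{hyper_complex} with $c=4$ and $H=0$. Suppose $\Sigma^{2n-1}$ is a complete minimal real hypersurface in $\mathbb{C}P^n$ with $|A|^2\leq k_0<2(n-1)$.

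\textbf{Step 1: Ricci bound.} With $|H|=0$ and $c=4$, formula \eqref{Ricci} gives, for every unit tangent $X$,
$$
\ric X=(2n+1)-3\langle X,J\nu\rangle^2-|AX|^2\geq 2n-2-k_0 .
$$
So the Ricci curvature is bounded from below, and Theorem \ref{OY} applies to the bounded function $u=|A|^2$.

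\textbf{Step 2: lower bound for the Laplacian.} For a minimal hypersurface with $c=4$, equation \eqref{eq:hyper2} reduces to
$$
\frac12\Delta|A|^2=|\nabla A|^2-|A|^4+(2n-4)|A|^2+3|tA-At|^2 .
$$
Discarding the two nonnegative terms $|\nabla A|^2$ and $3|tA-At|^2$ gives
$$
\frac12\Delta|A|^2\geq |A|^2\big(2n-4-|A|^2\big).
$$
This yields only the threshold $2n-4$, which is weaker than the claimed $2(n-1)$. The main obstacle is therefore to extract an extra $2|A|^2$ from the term $3|tA-At|^2$. I would use the frame of Proposition \ref{hyper_complex}, with $E_1=-J\nu$ and $E_{j+n-2}=JE_j$. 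In that proof one has $|tA|^2=|At|^2=|A|^2-|AE_1|^2$, so
$$
|tA-At|^2=2|A|^2-2|AE_1|^2-2\trace(AtAt).
$$
I would expand $\trace(AtAt)$ in this frame and try to show $3|tA-At|^2\geq 2|A|^2$, possibly after adding a multiple of $|\nabla A|^2$. If that fails, I would instead combine \eqref{eq:hyper2} with the alternative form \eqref{eq:hyper1} and the curvature terms $R_{ijij}$. Either route should give
$$
\frac12\Delta|A|^2\geq |A|^2\big(2(n-1)-|A|^2\big)\geq |A|^2\big(2(n-1)-k_0\big).
$$

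\textbf{Step 3: Omori--Yau.} Take a sequence $p_k$ with $u(p_k)\to\sup u$ and $\Delta u(p_k)<1/k$. In the limit we get
$$
0\geq \sup|A|^2\,\big(2(n-1)-k_0\big),
$$
hence $\sup|A|^2=0$ and $\Sigma$ is totally geodesic. This contradicts the fact that $\mathbb{C}P^n$ admits no totally geodesic real hypersurfaces, a consequence of the Codazzi equation \eqref{eq:Codazzi}. With $A=0$, $V=\nu$ and $c=4$, it forces
$$
\langle JX,\nu\rangle tY-\langle JY,\nu\rangle tX-2\langle X,JY\rangle t\nu=0 .
$$
Taking $X=J\nu$ and $Y\perp J\nu$ gives $tY=0$, which is impossible for $n\geq2$. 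The case $n=1$ is vacuous, since then $2(n-1)=0$ and no $k_0\geq 0$ satisfies $k_0<0$.
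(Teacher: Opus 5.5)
Your Step~2 is where the proof breaks, and neither repair you propose can work. As an algebraic statement, $3|tA-At|^2\geq 2|A|^2$ is false. Any traceless $A$ commuting with $t$ is a counterexample: for instance $A=\mathrm{diag}(\mu,\lambda,\dots,\lambda)$ in your frame, with $\mu+(2n-2)\lambda=0$, has $tA=At$ but $A\neq 0$. This is not an artificial configuration; it is exactly the shape operator of the minimal geodesic sphere ($\tan^2r=2n-1$, where $|A|^2=2(n-1)$). So expanding $\trace(AtAt)$ in the frame cannot produce the missing $2|A|^2$. Passing to \eqref{eq:hyper1} does not help either. It is the same identity rewritten through the Gauss equation, and with no hypothesis on sectional curvature the terms $R_{ijij}$ carry nothing beyond \eqref{eq:hyper2}.

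The missing idea is that $|\nabla A|^2$, which you discard as merely nonnegative, is bounded below by a positive constant on every real hypersurface of $\mathbb{C}P^n$, namely $|\nabla A|^2\geq 4(n-1)$. The paper simply quotes this bound from the real-hypersurface literature. It is the quantitative price of the failure of the Codazzi property:
\begin{itemize}
\item By \eqref{eq:Codazzi}, the skew part $B(X,Y,Z)=\langle(\nabla_XA)Y-(\nabla_YA)X,Z\rangle$ is the fixed tensor coming from the $c/4$-term, and in your frame $|B|^2=12(n-1)$ when $c=4$.
\item Write $\langle(\nabla_XA)Y,Z\rangle=T_s+T_0$, with $T_s$ totally symmetric and $T_0\perp T_s$. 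Since $B$ depends only on $T_0$, one checks $|B|^2=3|T_0|^2$.
\item Hence $|\nabla A|^2\geq|T_0|^2=4(n-1)$.
\end{itemize}
Inserting this into \eqref{eq:hyper2} gives
\[
\tfrac12\Delta|A|^2\geq-|A|^4+2(n-2)|A|^2+4(n-1)=(|A|^2+2)\big(2(n-1)-|A|^2\big)\geq 2\big(2(n-1)-k_0\big)>0 .
\]
Omori--Yau, with your correct Ricci bound from Step~1, then gives an immediate contradiction; this is the paper's argument. Your totally geodesic argument in Step~3 becomes unnecessary, though it is correct. Alternatively, since $4(n-1)>2k_0\geq 2|A|^2$, the same bound yields exactly the inequality you were aiming for in Step~2, and your Step~3 then goes through as written.
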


\begin{proof} Let us consider a complete minimal real hypersurface $\Sigma^{2n-1}$ in $\mathbb{C}P^n$ and assume that there exists a non-negative constant $k_0<2(n-1)$ such that $|A|^2\leq k_0$. 

Since $\Sigma$ is minimal, equation \eqref{eq:hyper2} becomes
$$
\frac{1}{2}\Delta |A|^2=|\nabla A|^2-|A|^4+2(n-2)|A|^2+3|tA-At|^2,
$$
and then, as we know that $|\nabla A|^2\geq 4(n-1)$ (\cite[Theorem 1.11]{NR}), one obtains
\begin{eqnarray}\label{hyper-minimal}
\frac{1}{2}\Delta |A|^2&\geq&-|A|^4+2(n-2)|A|^2+4(n-1)=(|A|^2+2)(2(n-1)-|A|^2)\\\nonumber&\geq& (|A|^2+2)(2(n-1)-k_0)>0.
\end{eqnarray}

From \eqref{Ricci}, one can see that, for a unit vector field $X$ tangent to $\Sigma$, we have
$$
\ric X\geq\frac{2n+3}{4}-k_0,
$$
which means that the Ricci curvature of the hypersurface is bounded from below. We can then apply the Omori-Yau Maximum Principle (Theorem \ref{OY}) to the function $u=|A|^2$. It follows that there exists a sequence of points $\{p_k\}_{k\in\mathbb{N}}\subset\Sigma$ satisfying
$$
\lim_{k\rightarrow\infty}|A|^2(p_k)=\sup_{\Sigma}|A|^2\quad\textnormal{and}\quad\Delta |A|^2(p_k)<\frac{1}{k},
$$
and, therefore, equation \eqref{hyper-minimal} gives
$$
0\geq(\sup_{\Sigma}|A|^2+2)(2(n-1)-k_0)>0,
$$
a contradiction which concludes the proof.
\end{proof}

\begin{remark} In \cite[Theorem 2]{L} it is treated the case of compact real hypersurfaces $\Sigma^{2n-1}$ with $|A|^2\leq 2(n-1)$, and it is proved that they are hypersurfaces of type $A$ and $|A|^2=2(n-1)$. This result can be also recovered by using Proposition \ref{hyper_complex}.
\end{remark}

\begin{theorem}\label{thm_cmc} Let $\Sigma^{2n-1}$ be a complete CMC non-minimal real hypersurface in $\mathbb{C}P^n$ with sectional curvature $K\geq 1$ and
$$
|A|^2\geq\frac{(2n-1)^2|H|^2-4(n-1)}{2n-4}.
$$
Then $\Sigma^{2n-1}$ is a geodesic sphere of radius $r=\pi/4$ with $|H|=2(n-1)/(2n-1)$.
\end{theorem}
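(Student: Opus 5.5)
The plan is to feed the curvature hypothesis into the Simons formula \eqref{eq:hyper1} (with $c=4$) and the lower bound on $|A|^2$ into \eqref{eq:hyper2}, and compare the two. First, since $K\geq 1$, every sectional curvature satisfies $R_{ijij}\geq 1$, so
$$
\sum_{i<j}(\lambda_i-\lambda_j)^2R_{ijij}\geq\sum_{i<j}(\lambda_i-\lambda_j)^2=(2n-1)|A|^2-(2n-1)^2|H|^2.
$$
This is a standard identity, since $\trace A=(2n-1)|H|$. Inserting this into \eqref{eq:hyper1}, and using $|\nabla A|^2\geq 0$ and $|tA-At|^2\geq 0$, I get a lower bound for $\frac12\Delta|A|^2$. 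This bound is a quadratic-free expression in $|A|^2$ and $|H|$, plus the term $3(2n-1)|H|\langle A(J\nu),J\nu\rangle$.

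Second, I need to control the term $\langle A(J\nu),J\nu\rangle$ and to obtain the sign. The natural choice is to use $K\geq 1$ once more via the Gauss equation and \eqref{Ricci}. The Ricci curvature in the direction $J\nu$ is at least $2n-2$ (a sum of $2n-2$ sectional curvatures). Comparing with \eqref{Ricci} at $X=J\nu$, which gives $\ric(J\nu)=2n-2+(2n-1)|H|\langle AJ\nu,J\nu\rangle-|AJ\nu|^2$, this yields
$$
(2n-1)|H|\langle AJ\nu,J\nu\rangle\geq|AJ\nu|^2\geq 0.
$$
This shows that the $\xi=-J\nu$ term is non-negative. Combining this with the assumed lower bound on $|A|^2$, I expect to obtain $(2n-4)|A|^2-(2n-1)^2|H|^2+4(n-1)\geq 0$. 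That is exactly the hypothesis, read off as the $\frac{c}{4}$-bracket of \eqref{eq:hyper2}, once one shows that the remaining cubic part $-|A|^4+(2n-1)|H|\trace A^3$ is handled by the first step. The outcome should be that $\Delta|A|^2\geq 0$.

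Third, I need to show $|A|^2$ is bounded, so that a maximum principle applies. Since $K\geq 1>0$ and $\Sigma$ is complete, Bonnet--Myers makes $\Sigma$ compact. Hence $|A|^2$ is subharmonic on a compact manifold and therefore constant. All the inequalities used above must then be equalities: $\nabla A=0$, $tA=At$ (the hypersurface is Hopf with $A$ commuting with $J$, i.e.\ type $A$), the sectional curvatures equal $1$ on the relevant planes, and equality holds in the hypothesis. Parallel second fundamental form together with $tA=At$ singles out type $A_1$ tubes, the geodesic spheres, with principal curvatures $\cot r$ and $2\cot 2r$. Finally, I would impose $K\geq 1$ with equality in the curvature identity, giving $\cot^2r=1$, so $r=\pi/4$. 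Then $\lambda=1$ with multiplicity $2n-2$ and $\alpha=0$, hence $|H|=2(n-1)/(2n-1)$. As a check, $|A|^2=2(n-1)$ matches the equality case.

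The main obstacle is the second step. Getting the signs of the bracket terms in \eqref{eq:hyper1} and \eqref{eq:hyper2} to combine into precisely the stated threshold $\frac{(2n-1)^2|H|^2-4(n-1)}{2n-4}$ may require subtracting one formula from the other to eliminate $|\nabla A|^2$ or $\trace A^3$. I would try first to add \eqref{eq:hyper2} and \eqref{eq:hyper1} with suitable weights, and if that fails, to use Lemma~\ref{l:oku} on the traceless part of $A$ to bound $\trace A^3$.
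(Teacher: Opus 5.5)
There is a genuine gap in your second step, and it sits exactly where the constant $4(n-1)$ in the threshold comes from. With $c=4$, insert $R_{ijij}\geq 1$ into \eqref{eq:hyper1}. Then drop $\frac32|tA-At|^2$ and $3(2n-1)|H|\langle A(J\nu),J\nu\rangle$; the latter is $\geq 0$ by your correct use of \eqref{Ricci}. This gives
\[
\frac12\Delta|A|^2\geq|\nabla A|^2+(2n-4)|A|^2-(2n-1)^2|H|^2 .
\]
If you only use $|\nabla A|^2\geq 0$, the hypothesis says merely that the right-hand side is $\geq -4(n-1)$, so subharmonicity does not follow. Indeed, at the very model you are aiming for (the geodesic sphere of radius $\pi/4$, where $|A|^2=(2n-1)|H|=2(n-1)$) one has $(2n-4)|A|^2-(2n-1)^2|H|^2=-4(n-1)<0$.

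The missing ingredient, which the paper uses, is the lower bound $|\nabla A|^2\geq 4(n-1)$. It holds for every real hypersurface of $\mathbb{C}P^n$, with equality exactly for type $A$ \cite[Theorem 1.11, Corollary 4.4]{NR}; it reflects the non-vanishing Codazzi defect in \eqref{eq:Codazzi}. With it you get $\frac12\Delta|A|^2\geq(2n-4)|A|^2-(2n-1)^2|H|^2+4(n-1)\geq 0$.

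Your fallback ideas cannot produce this constant. Equations \eqref{eq:hyper1} and \eqref{eq:hyper2} are the same identity rewritten via the Gauss equation, so any weighted combination only returns the Gauss equation. Lemma~\ref{l:oku} bounds $\trace A^3$, which does not occur in \eqref{eq:hyper1} at all.

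The same omission breaks your equality analysis. You conclude $\nabla A=0$, but no real hypersurface of $\mathbb{C}P^n$ has parallel shape operator. For a unit $X\perp J\nu$ and $Y=JX$, equation \eqref{eq:Codazzi} gives $(\nabla_XA)Y-(\nabla_YA)X=2J\nu\neq 0$. So the route from parallel $A$ plus $tA=At$ to type $A_1$ is vacuous; the geodesic sphere of radius $\pi/4$ itself has $|\nabla A|^2=4(n-1)$. The correct equality information is:
\begin{itemize}
\item $|\nabla A|^2=4(n-1)$ and $tA=At$, hence type $A$;
\item $A(J\nu)=0$, so the Hopf principal curvature $2\cot 2r$ vanishes and $r=\pi/4$;
\item $K\geq 1$, which excludes type $A_2$: at $r=\pi/4$, a plane spanned by principal directions with curvatures $1$ and $-1$ has sectional curvature $0$.
\end{itemize}

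Your replacement of the Omori--Yau step by Bonnet--Myers is legitimate and simpler than the paper's argument: $\ric\geq 2n-2>0$ forces compactness, and then you integrate.
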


\begin{proof} Let $\lambda_i$ denote the eigenfunctions of $A$. Since $K\geq 1$, we have
$$
\sum_{i<j}(\lambda_i-\lambda_j)^2R_{ijij}\geq\sum_{i<j}(\lambda_i-\lambda_j)^2=(2n-1)|A|^2-(2n-1)^2|H|^2,
$$
and $\ric J\nu\geq 2n-2$. We also have, from equation \eqref{Ricci},
$$
\ric J\nu=2n-2+(2n-1)|H|\langle A(J\nu),J\nu\rangle-|A(J\nu)|^2,
$$
which implies $(2n-1)|H|\langle A(J\nu),J\nu\rangle\geq|A(J\nu)|^2\geq 0$.

Next, from \eqref{eq:hyper1} and again using $|\nabla A|^2\geq 4(n-1)$, one obtains
\begin{eqnarray}\label{eq:cmc}
\frac{1}{2}\Delta|A|^2&=&|\nabla A|^2-3|A|^2+\frac{3}{2}|tA-At|^2+3(2n-1)|H|\langle A(J\nu),J\nu\rangle\\\nonumber &&+\sum_{i<j}(\lambda_i-\lambda_j)^2R_{ijij}\\\nonumber&\geq&(2n-4)|A|^2-(2n-1)^2|H|^2+4(n-1)\geq 0.
\end{eqnarray}
Consider $u=(2n-4)|A|^2-(2n-1)^2|H|^2+4(n-1)\geq 0$ and note that, since the Ricci curvature of $\Sigma$ is bounded from below, the Omori-Yau Maximum Principle holds on the hypersurface. For $u$, we have that there exists a sequence of points $\{p_k\}_{k\in\mathbb{N}}\subset\Sigma$ satisfying
$$
\lim_{k\rightarrow\infty}u(p_k)=\sup_{\Sigma}u\quad\textnormal{and}\quad\Delta u(p_k)<\frac{1}{k}.
$$
It follows, from \eqref{eq:cmc}, that $\sup_{\Sigma}u=0$ and therefore $u$ vanishes on $\Sigma$ and all the inequalities we used to get \eqref{eq:cmc} become equalities. 

Hence, we have $K=1$, $tA=At$, $|\nabla A|^2=4(n-1)$, $A(J\nu)=0$, and
$$
|A|^2\geq\frac{(2n-1)^2|H|^2-4(n-1)}{2n-4}.
$$
From $tA=At$ (and also $|\nabla A|^2=4(n-1)$) it follows that $\Sigma$ is a homogeneous type~$A$ hypersurface (\cite[Theorem 4.1]{NR}) and, moreover, a geodesic sphere (which is a type $A1$ hypersurface) with $|A|^2=2n-2$, which allows us to compute the exact value of the mean curvature (\cite[Theorems 3.13, 3.14]{NR}).
\end{proof}

The last two results of this section are concerned with compact real hypersurfaces in $\mathbb{C}H^n$.

\begin{proposition}\label{min_chn} There are no minimal compact real hypersurfaces in $\mathbb{C}H^n$ such that
\begin{equation}\label{ineq}
|A|^2\leq\sqrt{n^2+12n+12}-n-4.
\end{equation}
\end{proposition}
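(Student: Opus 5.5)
The plan is to specialize the Simons type formula \eqref{eq:hyper2} of Proposition \ref{hyper_complex} to $c=-4$ and $H=0$. Combined with a lower bound for $|\nabla A|^2$ and an upper bound for $|tA-At|^2$, it should show that $|A|^2$ is subharmonic under \eqref{ineq}. A maximum-principle argument on the compact hypersurface then forces all the inequalities to be equalities, and the equality cases should be incompatible.

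First, for a minimal real hypersurface in $\mathbb{C}H^n$, equation \eqref{eq:hyper2} reads
$$
\frac{1}{2}\Delta|A|^2=|\nabla A|^2-|A|^4-(2n-4)|A|^2-3|tA-At|^2 .
$$
For the gradient term I will use the analogue for $\mathbb{C}H^n$ of the estimate used above for $\mathbb{C}P^n$ (\cite[Theorem 1.11]{NR}). That estimate depends only on $c^2$, so for $c=-4$ it should again give $|\nabla A|^2\geq 4(n-1)$, with equality if and only if $tA=At$. For the last term, with $E_1=-J\nu$ and the frame from the proof of Proposition \ref{hyper_complex}, we have $|tA|^2=|At|^2=|A|^2-|AE_1|^2$. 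Hence
$$
|tA-At|^2\leq 2(|tA|^2+|At|^2)=4(|A|^2-|AE_1|^2)\leq 4|A|^2 .
$$
Substituting both bounds gives
$$
\frac{1}{2}\Delta|A|^2\geq -\big(|A|^4+(2n+8)|A|^2-4(n-1)\big).
$$
The right-hand side is non-negative exactly when $|A|^2\leq\sqrt{(n+4)^2+4(n-1)}-(n+4)=\sqrt{n^2+12n+12}-n-4$, which is the bound \eqref{ineq}.

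Next, assume \eqref{ineq} holds. Then $\Delta|A|^2\geq0$ on the compact hypersurface $\Sigma^{2n-1}$, so $|A|^2$ is constant, by the maximum principle or by integrating. Consequently every inequality above is an equality. In particular $|\nabla A|^2=4(n-1)$, which by the equality case gives $tA=At$. We also get $|tA-At|^2=4|A|^2$ and $|A|^2$ equal to the positive root $\sqrt{n^2+12n+12}-n-4$; this root is positive since $4(n-1)>0$. Combining $tA=At$ with $|tA-At|^2=4|A|^2$ yields $|A|^2=0$, which contradicts the positivity of the root. This proves the proposition.

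The main obstacle is the gradient estimate in $\mathbb{C}H^n$, together with its equality case. I expect to verify it directly, as in \cite{NR}: decompose $\nabla A$ using the Codazzi equation \eqref{eq:Codazzi} with $V=\nu$, and isolate the non-symmetric part $\frac{c}{4}(\langle JX,\nu\rangle tY-\langle JY,\nu\rangle tX-2\langle X,JY\rangle t\nu)$, whose squared norm is $\frac{c^2}{4}(n-1)$. If the equality characterization $tA=At$ proves delicate, a fallback is available. Equality in $|tA-At|^2\leq 2(|tA|^2+|At|^2)$ forces $tA=-At$, and equality in the last step forces $AE_1=0$. Then $A(JX)=-J(AX)$ on $E_1^{\perp}$ already gives a strong algebraic constraint, which I would combine with the Codazzi equation to reach a contradiction.
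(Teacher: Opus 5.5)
Your proposal is correct and follows the paper's proof step for step. It uses the same specialization of \eqref{eq:hyper2} to $c=-4$, $H=0$, and the same bounds $|\nabla A|^2\geq 4(n-1)$ and $|tA-At|^2\leq 4|A|^2$. It then integrates over the compact hypersurface and reaches the same contradiction: $tA=At$ together with $|tA-At|^2=4|A|^2$ and the positivity of the root (the paper gets $tA=At$ by passing through type $A$ hypersurfaces, \cite[Corollary 4.4, Theorem 4.1]{NR}).

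One small correction to your side remark on verifying the gradient estimate. The Codazzi defect itself has squared norm $\frac{3c^2}{4}(n-1)$. What bounds $|\nabla A|^2$ from below is the part of $\nabla A$ orthogonal to the totally symmetric tensors; it is determined by the defect and has one third of that squared norm, namely $\frac{c^2}{4}(n-1)$.
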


\begin{proof} Let $\Sigma^{2n-1}$ be a compact minimal real hypersurface in $\mathbb{C}P^n$ and assume that it satisfies the inequality \eqref{ineq}. Then equation \eqref{eq:hyper2} gives
\begin{eqnarray*}
\frac{1}{2}\Delta |A|^2&=&|\nabla A|^2-|A|^4-2(n-2)|A|^2-3|tA-At|^2\\&\geq&-|A|^4-(2n+8)|A|^2+4(n-1)\geq 0,
\end{eqnarray*}
where we have again used the facts that $|\nabla A|^2\geq 4(n-1)$ and also $|tA-At|^2\leq 4|tA|^2\leq 4|A|^2$. By integrating this inequality over $\Sigma$, one sees that
$$
|A|^2=\sqrt{n^2+12n+12}-n-4
$$
and that all the above inequalities must be equalities. But this means that $|\nabla A|^2=4(n-1)$, which is equivalent to $\Sigma$ being a type $A$ hypersurface (\cite[Corollary 4.4]{NR}) and then $tA=At$ (\cite[Theorem 4.1]{NR}). Since $|tA-At|^2=4|A|^2$, all these lead to $|A|=0$, which is a contradiction.
\end{proof}

\begin{proposition} If $\Sigma^{2n-1}$ is a compact CMC real hypersurface in $\mathbb{C}H^n$ with sectional curvature $K\geq -1$, then there exists a point $p\in\Sigma^{2n-1}$ such that, at $p$,
$$
|A|^2>C(n,|H|)=\frac{\sqrt{(8n+17)(2n-1)^2|H|^2+32(n^2-1)}-3(2n-1)|H|}{4(n+1)}.
$$
\end{proposition}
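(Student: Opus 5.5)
The plan is to argue by contradiction, following the pattern of Proposition~\ref{min_chn} but now starting from the alternative Simons formula \eqref{eq:hyper1}. With $c=-4$ the factor $c/8$ equals $-1/2$, so \eqref{eq:hyper1} reads
$$
\frac{1}{2}\Delta|A|^2=|\nabla A|^2+3|A|^2-\frac{3}{2}|tA-At|^2-3(2n-1)|H|\langle A(J\nu),J\nu\rangle+\sum_{i<j}(\lambda_i-\lambda_j)^2R_{ijij}.
$$
I will bound each term on the right from below by a function of $|A|$ alone.

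\begin{itemize}
\item Since $K\geq -1$, the curvature sum satisfies
$$
\sum_{i<j}(\lambda_i-\lambda_j)^2R_{ijij}\geq-\sum_{i<j}(\lambda_i-\lambda_j)^2=-(2n-1)|A|^2+(2n-1)^2|H|^2.
$$
\item As in the proof of Proposition~\ref{min_chn}, $|tA-At|^2\leq 4|A|^2$.
\item Since $J\nu$ is a unit vector, $|\langle A(J\nu),J\nu\rangle|\leq|A|$.
\item As in the previous results, $|\nabla A|^2\geq 4(n-1)$ (\cite{NR}).
\end{itemize}

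Putting these together gives
$$
\frac{1}{2}\Delta|A|^2\geq 4(n-1)+(2n-1)^2|H|^2-3(2n-1)|H|\,|A|-2(n+1)|A|^2=:P(|A|).
$$
The quadratic $P(x)$ has negative leading coefficient. Its discriminant is
$$
9(2n-1)^2|H|^2+8(n+1)\big(4(n-1)+(2n-1)^2|H|^2\big)=(8n+17)(2n-1)^2|H|^2+32(n^2-1),
$$
so its unique positive root is exactly the constant $C(n,|H|)$ of the statement. Hence $P(x)\geq 0$ for $0\leq x\leq C(n,|H|)$.

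Now suppose the conclusion fails, i.e.\ the curvature quantity stays $\leq C(n,|H|)$ at every point. Then $P(|A|)\geq 0$ on $\Sigma$, so $\Delta|A|^2\geq 0$. Integrating over the compact hypersurface shows $\Delta|A|^2=0$, and therefore every inequality used above is an equality.

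\begin{itemize}
\item Equality in $|\nabla A|^2\geq 4(n-1)$ means $\Sigma$ is of type $A$ (\cite[Corollary 4.4]{NR}), and hence $tA=At$ (\cite[Theorem 4.1]{NR}).
\item Equality in $|tA-At|^2\leq 4|A|^2$, combined with $tA=At$, forces $A=0$.
\item With $A=0$ we get $P(0)=4(n-1)+(2n-1)^2|H|^2>0$, which contradicts $P(|A|)=\frac12\Delta|A|^2=0$.
\end{itemize}

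This contradiction gives a point $p$ where the strict inequality holds.

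The main obstacle is matching the algebra to the stated normalization. The estimate above naturally produces a bound on $|A|$, not on $|A|^2$, so I must check carefully whether the statement's $|A|^2>C$ is meant as $|A|>C$. If it is not, I would redo the estimate $|\langle A(J\nu),J\nu\rangle|\leq|A|$ in terms of $|A|^2$, or compare the two thresholds. I would also double-check that the equality-case rigidity results from \cite{NR} are valid in $\mathbb{C}H^n$ and not only in $\mathbb{C}P^n$.
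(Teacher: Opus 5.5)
Your proposal follows the paper's proof step for step: the same lower bounds ($K\geq -1$, $|\nabla A|^2\geq 4(n-1)$, $|tA-At|^2\leq 4|A|^2$, $\langle A(J\nu),J\nu\rangle\leq|A|$) applied to \eqref{eq:hyper1}, the same quadratic in $|A|$ with positive root $C(n,|H|)$, and the same integration and type~$A$ contradiction taken from Proposition~\ref{min_chn}. Your normalization concern is justified: the paper's argument also only produces a point where $|A|>C(n,|H|)$, which is exactly how its proof ends, so the $|A|^2$ in the statement should be read as $|A|$, and your argument proves what the paper's proof actually proves.
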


\begin{proof} As in the proof of Theorem \ref{thm_cmc}, if $\lambda_i$ are the eigenfunctions of $A$, from $K\geq -1$, one obtains
$$
\sum_{i<j}(\lambda_i-\lambda_j)^2R_{ijij}\geq-\sum_{i<j}(\lambda_i-\lambda_j)^2=-(2n-1)|A|^2+(2n-1)^2|H|^2.
$$

Next, from \eqref{eq:hyper1}, we have
\begin{eqnarray*}
\frac{1}{2}\Delta|A|^2&=&|\nabla A|^2-\frac{3}{2}|tA-At|^2+3|A|^2-3(2n-1)|H|\langle A(J\nu),J\nu\rangle\\&&+\sum_{i<j}(\lambda_i-\lambda_j)^2R_{ijij}\\&\geq&-(2n+2)|A|^2-3(2n-1)|H||A|+(2n-1)^2|H|^2+4(n-1),
\end{eqnarray*}
since $|\nabla A|^2\geq 4(n-1)$, $|tA-At|^2\leq 4|tA|^2\leq 4|A|^2$, and $\langle A(J\nu),J\nu\rangle\leq |A|$. 

Now, if we assume that $|A|\leq C(n,|H|)$, then we come to a contradiction in the same way as in the proof of Proposition \ref{min_chn}. Therefore, $|A|>C(n,|H|)$ holds at a point $p$ on the hypersurface.
\end{proof}

\begin{remark} Detailed information on the admissible range for the sectional curvature of real hypersurfaces in complex space forms can be found in \cite{MTT}.
\end{remark}

\section{Totally real submanifolds in complex space forms}

In this section we consider PMC totally real submanifolds $\Sigma^m$, $m\leq n$, in non-flat complex space forms $N^n(c)$ and study them under one of the additional geometric hypotheses that the vector field $JH$ is either normal or tangent to $\Sigma^m$.

Let us first consider the case when $JH$ is normal to the submanifold. The following result is a direct consequence of Theorem \ref{p:delta} and equation \eqref{eq:delta1}.

\begin{proposition}\label{p:tot1} If $\Sigma^m$, $m\leq n$, is a PMC totally real submanifold of a complex space form $N^n(c)$ with $JH$ normal to $\Sigma^m$, then
\begin{eqnarray}\label{eq:delta_tot_2}
\frac{1}{2}\Delta|A_H|^2&=&|\nabla A_H|^2+\frac{c}{4}\{m|A_H|^2-m^2|H|^4\}+m\trace A_H^3\\\nonumber&&-\sum_{\alpha=2m+1}^{2n}(\trace(A_HA_{\alpha}))^2,
\end{eqnarray}
where $\{E_{\alpha}\}_{\alpha=m+1}^{n+1}$ is a local orthonormal frame field in the normal bundle, or, equivalently,
\begin{equation}\label{eq:delta_tot_1}
\frac{1}{2}\Delta|A_H|^2=|\nabla A_H|^2+\sum_{i<j}(\lambda_i-\lambda_j)^2R_{ijij},
\end{equation}
where $\lambda_i$ are the eigenfunctions of $A_H$.
\end{proposition}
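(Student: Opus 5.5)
The plan is to specialize Theorem \ref{p:delta} and equation \eqref{eq:delta1} to $V=H$. The hypotheses of Theorem \ref{p:delta} hold: $H$ is parallel in the normal bundle because $\Sigma^m$ is PMC, and $\trace A_H=m|H|^2$ is constant. The work is therefore to show that every $c$-dependent term in \eqref{eq:delta2} and \eqref{eq:delta1}, apart from $m|A_H|^2-(\trace A_H)^2$, drops out. We then identify the remaining quadratic terms in the normal frame.

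\textbf{Vanishing of the $t$-terms.} Since $\Sigma^m$ is totally real, $JX$ is normal for every tangent $X$, so $t\equiv 0$ on $T\Sigma$. Since $JH$ is normal, we also have $tH=0$, and hence $tV=0$ for $V=H$. The terms then vanish as follows.
\begin{itemize}
\item $|tA_H|^2$ and $\trace(A_H(t(A_Ht)))$ vanish because $t$ is composed with tangent vectors.
\item $\trace(A_H(tA_{nH}))$ vanishes because $A_{nH}X$ is tangent and $t$ kills it.
\item $\trace(A_HA_{n(tH)})$ vanishes because $tH=0$.
\item The two $\sigma(\cdot,tH)$ terms vanish because $tH=0$.
\item $3m\langle A_H(tH),tH\rangle$ vanishes for the same reason.
\end{itemize}
One point needs care: in \eqref{eq:delta2} the symbol $t$ is applied to normal vectors such as $\sigma(X,tV)$ and $nV$. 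I would check in the derivation (\eqref{eq:1}, \eqref{eq:cx}, \eqref{R}) that these terms carry an explicit factor $tV$ or $tX$ with $X$ tangent, and so vanish. What survives of the curvature bracket is $\frac{c}{4}\{m|A_H|^2-m^2|H|^4\}$. In \eqref{eq:delta1} the whole bracket disappears, which gives \eqref{eq:delta_tot_1} directly.

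\textbf{The normal-frame sum.} Choose the frame with $E_{m+1}=H/|H|$. The term for $\alpha=m+1$ in the last sum of \eqref{eq:delta2} contributes the following pieces.
\begin{itemize}
\item The commutator part $\trace(A_HA_\alpha A_HA_\alpha-A_H^2A_\alpha^2)$ vanishes, since $A_\alpha$ is proportional to $A_H$.
\item $\trace(A_\alpha)\trace(A_H^2A_\alpha)=m|H|\cdot\trace A_H^3/|H|=m\trace A_H^3$.
\item $-(\trace(A_HA_\alpha))^2=-|A_H|^4/|H|^2$.
\end{itemize}
For $\alpha\ne m+1$ we have $\trace A_\alpha=\langle mH,E_\alpha\rangle=0$.

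\textbf{Main obstacle.} The hard step is to show that the commutator terms $\trace(A_HA_\alpha A_HA_\alpha-A_H^2A_\alpha^2)$ vanish for the remaining $\alpha$, and to reconcile the index range $\alpha\ge 2m+1$ with the $\alpha=m+1$ contribution $-|A_H|^4/|H|^2$. I would try the Ricci equation. Normal flatness along $H$, from $\nabla^\perp H=0$, gives $\langle R^\perp(X,Y)H,E_\alpha\rangle=0$. The ambient term is $\frac{c}{4}(\ldots)$ involving $\langle JY,H\rangle$ and $\langle JX,E_\alpha\rangle$, and it reduces with $JH$ normal. From this we get $[A_H,A_\alpha]=\frac{c}{4}(\text{terms in } J)$. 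For $E_\alpha\in JT\Sigma$ these $J$-terms need not vanish, so I would split the normal bundle into $JT\Sigma$ (indices $m+1,\dots,2m$) and its complement (indices $\ge 2m+1$). I would then compute the $J T\Sigma$ block explicitly using the symmetry $\langle\sigma(X,Y),JZ\rangle$ totally symmetric, which holds for totally real submanifolds. The expectation is that this block combines with the $-|A_H|^4/|H|^2$ term into the stated form. If it does not, I would fall back on showing that this bookkeeping is exactly equivalent to \eqref{eq:delta_tot_1}, via the Remark's identity $\sum_{i<j}(\lambda_i-\lambda_j)^2R_{ijij}$ expanded with \eqref{eq:R_cpn} using $t=0$.
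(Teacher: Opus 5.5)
\textbf{There is a genuine gap, at exactly the step you flag as the main obstacle, and it rests on a false premise.} The rest of your argument is fine and matches the paper. With $t\equiv 0$ on $T\Sigma$ and $tH=0$, the bracket in \eqref{eq:delta1} vanishes, which gives \eqref{eq:delta_tot_1}, and the bracket in \eqref{eq:delta2} reduces to $\frac{c}{4}\{m|A_H|^2-m^2|H|^4\}$.

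The false premise is your claim that the ambient term in the Ricci equation need not vanish for $E_\alpha\in JT\Sigma$. In fact $\bar R(X,Y)H=0$ identically. Put $Z=H$ in \eqref{eq:curv_cpn} and look at the five coefficients:
\begin{itemize}
\item $\langle X,H\rangle=\langle Y,H\rangle=0$, since $X,Y$ are tangent and $H$ is normal;
\item $\langle JX,H\rangle=-\langle X,JH\rangle=0$ and $\langle JY,H\rangle=0$, because $JH$ is normal;
\item $\langle X,JY\rangle=0$, because $\Sigma$ is totally real.
\end{itemize}
So every term vanishes, whatever normal vector you pair it with. Since $R^\perp(X,Y)H=0$, the Ricci equation then gives $[A_H,A_U]=0$ for every normal $U$, including the whole $JT\Sigma$ block. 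This is the key step of the paper's proof. It kills every term $\trace(A_HA_\alpha A_HA_\alpha-A_H^2A_\alpha^2)=-\frac12|[A_H,A_\alpha]|^2$.

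Your fallback does not avoid this step. Expanding $\sum_{i<j}(\lambda_i-\lambda_j)^2R_{ijij}$ with \eqref{eq:R_cpn} and $t=0$, in an eigenbasis $\{E_i\}$ of $A_H$, reproduces exactly these commutator terms. The reason is that $\sum_{i<j}(\lambda_i-\lambda_j)^2\langle A_\alpha E_i,E_j\rangle^2=\frac12|[A_H,A_\alpha]|^2$.

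Your expectation that the $JT\Sigma$ block ``combines'' with $-|A_H|^4/|H|^2$ is also mistaken. Once the commutators are gone, each $\alpha\neq m+1$ contributes only $-(\trace(A_HA_\alpha))^2\le 0$, because $\trace A_\alpha=0$ there. Such terms cannot absorb $-|A_H|^4/|H|^2$.

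The actual resolution is that the lower limit $2m+1$ in the statement is a misprint, inherited from Theorem \ref{p:delta}. By \eqref{R}, the sum runs over the full normal frame $\alpha=m+1,\dots,2n$, and $-|A_H|^4/|H|^2$ is simply its $\alpha=m+1$ term. The paper uses the formula this way in \eqref{delta_normal}. There $c=4$ and $|A_H|^2=m|H|^2$, so the $\alpha=m+1$ term equals $-m^2|H|^2$ and cancels $\frac{c}{4}m|A_H|^2$.

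A quick check confirms this reading. Take a small umbilical sphere in a totally geodesic, totally real $\mathbb{R}P^{m+1}\subset\mathbb{C}P^n$, so $A_H=\lambda^2\id$. Then both sides vanish only if the $\alpha=m+1$ term $m^2\lambda^6$ is included, since it must cancel $m\trace A_H^3=m^2\lambda^6$. With the vanishing of $\bar R(X,Y)H$ and the sum read over all normal indices, your specialization of Theorem \ref{p:delta} goes through as in the paper.
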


\begin{proof} Let $\{E_1,\ldots,E_m\}$ be an orthonormal frame field on $\Sigma$. Then, since $\Sigma$ is totally real and $JH$ is normal, we can consider the orthonormal frame field 
$$
\left\{E_{m+1}=\frac{H}{|H|},E_{m+2}=\frac{JH}{|H|},E_{m+3}=JE_1,\ldots,E_{2m+2}=JE_m,E_{2m+3},\ldots,E_{2n}\right\}
$$ 
in the normal bundle. 

From the Ricci equation of the submanifold,
\begin{equation}\label{eq:ricci}
\langle R^{\perp}(X,Y)V,U\rangle=\langle[A_V,A_U]X,Y\rangle+\langle\bar R(X,Y)V,U\rangle,
\end{equation}
which holds for all normal vector fields $U$ and $V$, and the expression \eqref{eq:curv_cpn} of the curvature tensor of $N^n(c)$, as $\nabla^{\perp}H=0$ and $\Sigma$ is totally real, we have $[A_H,A_{E_{\alpha}}]=0$, for all $\alpha\geq m+1$. Since $tA_H=0$, $nH=JH$, $tH=0$, and $ntH=0$, we get \eqref{eq:delta_tot_2}. Equation \eqref{eq:delta_tot_1} follows immediately from \eqref{eq:delta1}.
\end{proof}

\begin{remark} If $\Sigma^m$ is compact and has positive sectional curvature, it is easy to see, from \eqref{eq:delta_tot_1}, that it is also pseudo-umbilical with $\nabla A_H=0$.
\end{remark}

In \cite{FLMO} it is proved that the mean curvature $|H|$ of a proper-biharmonic submanifold $\Sigma^m$ of $\mathbb{C}P^n=\mathbb{C}P^n(4)$ with $JH$ normal, satisfies $|H|\leq 1$ and the equality occurs if and only if $\Sigma^m$ is pseudo-umbilical and PMC. Our next result shows that if the submanifold is assumed to be PMC and also totally real we have a gap in the admissible range of $|H|$. 

\begin{proposition}\label{prop:pb_cpn} If $\Sigma^m$ is a proper-biharmonic PMC totally real submanifold of $\mathbb{C}P^n$ with $JH$ normal and $|H|\geq(m-2)/m$, then $\nabla A_H=0$ and either
\begin{enumerate}

\item $|H|=1$, and $\Sigma^m$ is pseudo-umbilical; or

\item $|H|=(m-2)/m$, and $\Sigma^m$ is locally a product.
\end{enumerate}
\end{proposition}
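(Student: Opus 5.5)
The plan is to use the biharmonic equation to force $|A_H|^2$ to be constant, so that the left-hand side of \eqref{eq:delta_tot_2} vanishes identically. The Simons type formula then becomes a pointwise algebraic identity, which I would estimate with Lemma \ref{l:oku}. No compactness or maximum principle should be needed.

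\emph{Step 1 (normal part of the biharmonic equation).} Since $\nabla^{\perp}H=0$, the first equation of Theorem \ref{thm:split} reduces to $\trace\sigma(\cdot,A_H\cdot)+\trace(\bar R(\cdot,H)\cdot)^{\perp}=0$. I would compute the curvature term from \eqref{eq:curv_cpn} with $c=4$, in a tangent orthonormal frame $\{E_i\}$. Because $\langle H,E_i\rangle=0$, $\langle JH,E_i\rangle=0$ (as $JH$ is normal) and $\langle JE_i,E_i\rangle=0$, every term vanishes except $-\langle E_i,E_i\rangle H$. This gives $\trace\bar R(\cdot,H)\cdot=-mH$, and hence $\trace\sigma(\cdot,A_H\cdot)=mH$.

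Taking inner products of this identity with normal vectors gives two facts. Against $H$ it gives $|A_H|^2=m|H|^2$, which is constant since $|H|$ is constant. Against any normal $E_\alpha\perp H$ it gives $\trace(A_HA_\alpha)=0$, so the sum $\sum(\trace(A_HA_\alpha))^2$ in \eqref{eq:delta_tot_2} reduces to the $H$-direction contribution $|A_H|^4/|H|^2=m^2|H|^2$, if that direction is included. The tangent part of the biharmonic equation holds automatically. Consequently $\Delta|A_H|^2=0$.

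\emph{Step 2 (traceless decomposition).} I would write $A_H=|H|^2\id+\Phi$ with $\trace\Phi=0$, so that $|\Phi|^2=|A_H|^2-m|H|^4=m|H|^2(1-|H|^2)$. In particular this already gives $|H|\le 1$. Next I would expand $\trace A_H^3=m|H|^6+3|H|^2|\Phi|^2+\trace\Phi^3$ and apply Lemma \ref{l:oku} to the eigenvalues of $\Phi$, which gives $\trace\Phi^3\ge-\frac{m-2}{\sqrt{m(m-1)}}|\Phi|^3$.

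Substituting everything into \eqref{eq:delta_tot_2} with zero left-hand side should yield an inequality of the form
$$
0\ge|\nabla A_H|^2+|\Phi|^2\,P\big(|H|,|\Phi|\big).
$$
After using $|\Phi|=|H|\sqrt{m(1-|H|^2)}$, the factor $P$ becomes an explicit function of $|H|$. The hypothesis $|H|\ge(m-2)/m$ should be exactly what makes $P\ge0$, with $P=0$ precisely at $|H|=(m-2)/m$.

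\emph{Step 3 (equality cases).} Once $P\ge0$, every term must vanish, so $\nabla A_H=0$, and either $\Phi=0$ or $P=0$. If $\Phi=0$, then $\Sigma^m$ is pseudo-umbilical, and $|\Phi|^2=m|H|^2(1-|H|^2)=0$ forces $|H|=1$. Otherwise $|H|=(m-2)/m$ and equality holds in Lemma \ref{l:oku}, so $A_H$ has two distinct constant eigenvalues, of multiplicities $m-1$ and $1$. Their eigendistributions are parallel because $\nabla A_H=0$, so $\Sigma^m$ is locally a product by de Rham.

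The main obstacle is Step 2: getting the precise coefficients and signs in \eqref{eq:delta_tot_2} right. These include the normalization of the $m\trace A_H^3$ term relative to the unit normal $H/|H|$, and whether the $H$-direction is counted in the last sum. Only with the correct bookkeeping does the threshold $(m-2)/m$ come out. I would first check the computation against the hypersurface case of the classical Simons formula, and only then solve $P(|H|)\ge0$.
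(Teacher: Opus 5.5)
Your proposal is correct and follows essentially the same route as the paper. The normal part of the biharmonic equation gives $|A_H|^2=m|H|^2$ and $\trace(A_HA_U)=0$ for $U\perp H$. These are inserted into \eqref{eq:delta_tot_2}, the traceless part is estimated with Lemma~\ref{l:oku}, and the proof ends with the same equality analysis and de Rham splitting. Your bookkeeping worry resolves as you suspected: counting the $H$-direction in the last sum gives $0=|\nabla A_H|^2+m\trace A_H^3-m^2|H|^4=|\nabla A_H|^2+m\trace\Phi^3+2m|H|^2|\Phi|^2$. Hence $P=m\bigl(2|H|^2-\frac{m-2}{\sqrt{m(m-1)}}|\Phi|\bigr)$, which, with $|\Phi|=|H|\sqrt{m(1-|H|^2)}$, is nonnegative exactly when $|H|\ge(m-2)/m$.
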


\begin{proof} From Theorem \ref{thm:split} and formula \eqref{eq:curv_cpn}, one can see that $\Sigma^m$ satisfies the hypotheses if and only if
\begin{equation*}
\trace\sigma(\cdot,A_H\cdot)=mH,
\end{equation*} 
which is equivalent to $|A_H|^2=m|H|^2$ and $\trace(A_HA_U)=0$, for all normal vector fields $U$ orthogonal to $H$.

Next, from Proposition \ref{p:tot1}, one obtains
\begin{equation}\label{delta_normal}
0=\frac{1}{2}\Delta|A_H|^2=|\nabla A_H|^2+m\trace A_H^3-m^2|H|^4.
\end{equation}

Let us now consider $\phi_H=A_H-|H|^2\id$, the traceless part of $A_H$. We have
\begin{equation*}
\trace A_H^3=\trace\phi_H^3+3|H|^2|\phi_H|^2+m|H|^6
\end{equation*}
and
\begin{equation*}
|\phi_H|^2=|A_H|^2-m|H|^4=m|H|^2(1-|H|^2).
\end{equation*}
Then, from \eqref{delta_normal} and Lemma \ref{l:oku}, it follows
\begin{eqnarray*}
0=\frac{1}{2}\Delta|\phi_H|^2&\geq&-\frac{m(m-2)}{\sqrt{m(m-1)}}|\phi_H|^3+2m|H|^2|\phi_H|^2\\&=&m|\phi_H|^2\left(2|H|^2-\frac{m-2}{\sqrt{m(m-1)}}|\phi_H|\right)\geq 0,
\end{eqnarray*}
as $|H|\geq(m-2)/m$.

In conclusion, either $\phi_H=0$, which means that $\Sigma$ is pseudo-umbilical and $|H|=1$, or $|\phi_H|=(2(m-2)\sqrt{m-1})/(m\sqrt{m})$, which is equivalent to $|H|=(m-2)/m$. In both cases $\nabla A_H=0$ and, moreover, all the inequalities we used become equalities.

If $|H|=(m-2)/m$, we can proceed exactly as in the similar case of PMC proper-biharmonic submanifolds in the Euclidean sphere (\cite[Theorem~3.11]{BO}). Thus, from Lemma \ref{l:oku}, it follows that the principal curvatures in the direction of $H$ are constant functions on $\Sigma$, given by
$$
\lambda_1=\ldots=\lambda_{m-1}=\lambda=\frac{m-2}{m},\quad\lambda_m=\mu=-\frac{m-2}{m}.
$$
which define the distributions
$$
T_{\lambda}=\{X\in T\Sigma: A_HX = \lambda X\}\quad\textnormal{and}\quad T_{\mu}=\{X\in T\Sigma: A_HX=\mu X\}.
$$
Since $A_H$ is parallel, these distributions are mutually orthogonal, smooth, involutive and parallel, which means that we can apply the de
Rham Decomposition Theorem (see \cite{dR}).

Hence, for every point $p\in\Sigma$ there exists a neighborhood $U\subset\Sigma$ which is isometric to a product $\Sigma^{m-1}\times (-\epsilon,\epsilon)$, where
$\Sigma^{m-1}$ is an integral submanifold for $T_{\lambda}$ through $p$ and the interval $I=(-\epsilon,\epsilon)$ corresponds to the integral curves of a unit vector field $Y\in T_{\mu}$ on U. We note that $\Sigma^{m-1}$ is a totally geodesic submanifold in $U$ and the integral curves of $Y$ are geodesics in $U$. Moreover, $Y$ is a parallel vector field on $U$.
\end{proof}

Next, we consider the case when the vector field $JH$ is tangent to the submanifold $\Sigma^m$ and obtain the following proposition.

\begin{proposition}\label{p:tot2} If $\Sigma^m$, $m\leq n$, is a complete PMC totally real submanifold of a complex space form $N^n(c)$ with $JH$ tangent to $\Sigma^m$, then
\begin{eqnarray}\label{eq:delta_tot_2_2}
\frac{1}{2}\Delta|A_H|^2&=&|\nabla A_H|^2+\frac{c}{4}\{(m-3)|A_H|^2+3m\langle A_H(JH),JH\rangle\\\nonumber&&+4(m-1)|H|^2-m^2|H|^4\}\\\nonumber&&+m\trace A_H^3-\sum_{\alpha=2m+1}^{2n}(\trace(A_HA_{\alpha}))^2,
\end{eqnarray}
where $\{E_{\alpha}\}_{\alpha=m+1}^{n+1}$ is a local orthonormal frame field in the normal bundle, or, equivalently,
\begin{eqnarray}\label{eq:delta_tot_1_2}
\frac{1}{2}\Delta|A_H|^2&=&|\nabla A_H|^2+\frac{c}{4}\{-3|A_H|^2+3m\langle A_H(JH),JH\rangle+3(m-1)|H|^2\}\\\nonumber&&+\sum_{i<j}(\lambda_i-\lambda_j)^2R_{ijij},
\end{eqnarray}
where $\lambda_i$ are the eigenfunctions of $A_H$.
\end{proposition}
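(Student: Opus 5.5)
The plan is to specialize Theorem~\ref{p:delta} to $V=H$, exactly as in the proof of Proposition~\ref{p:tot1}, but now with $JH$ tangent. Since $\Sigma^m$ is totally real, $J$ maps $T\Sigma$ into the normal bundle. Hence $t=0$ on tangent vectors, and $tX=0$, $nX=JX$ for all tangent $X$. For the normal field $H$ we have $JH$ tangent, so $tH=JH$ and $nH=0$. Moreover $n(tH)=n(JH)=J^2H=-H$.

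I would choose the adapted normal frame
$$
E_{m+1}=\frac{H}{|H|},\quad E_{m+2}=JE_1,\ \ldots,\ E_{2m+1}=JE_m,\quad E_{2m+2},\ldots,E_{2n},
$$
where $E_1=JH/|H|$ is tangent. From the Ricci equation \eqref{eq:ricci}, together with $\nabla^{\perp}H=0$ and \eqref{eq:curv_cpn}, one computes $\langle\bar R(X,Y)H,U\rangle$. In the totally real setting this vanishes for $U$ outside $J(T\Sigma)$. So $A_H$ commutes with $A_\alpha$ for those $\alpha$, and the terms $\trace(A_HA_\alpha A_HA_\alpha-A_H^2A_\alpha^2)$ vanish there.

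For $U=JZ$ the curvature term need not vanish; it is a multiple of $c$ involving $\langle JH,\cdot\rangle$. Here I would use the symmetry $\langle\sigma(X,Y),JZ\rangle$ totally symmetric in $X,Y,Z$, which follows from $\bar\nabla J=0$ and totally realness. It gives $A_{JX}Y=A_{JY}X$ and $A_{JX}=-J\sigma(X,\cdot)$. These identities let me rewrite the $c$-terms in \eqref{eq:delta2} involving $\sigma(A_H\cdot,tH)$ and $A_{n(tH)}=-A_H$. I expect $3\trace(A_HA_{n(tH)})=-3|A_H|^2$, which accounts for $m|A_H|^2\to(m-3)|A_H|^2$. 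I also expect
$$
3\trace\langle t\sigma(A_H\cdot,JH),\cdot\rangle-3\trace\langle t\sigma(A_HJH,\cdot),\cdot\rangle
$$
to produce, through $\sigma(X,JH)$ with $J$ applied and $tJ=-\id$ on the $J(T\Sigma)$ part, terms of the form $\langle A_H(JH),JH\rangle$ and $|H|^2\trace$ combinations. Finally, $3m\langle A_H(tH),tH\rangle=3m\langle A_H(JH),JH\rangle$.

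The quartic terms over $\alpha$ in the $J(T\Sigma)$ directions, together with the commutator curvature contributions, should combine with $\trace(A_\alpha)\trace(A_H^2A_\alpha)$ and $\trace A_H=m|H|^2$. This gives $m\trace A_H^3-\sum_\alpha(\trace A_HA_\alpha)^2$ plus the leftover $c$-term $4(m-1)|H|^2$ coming from the nonzero Ricci-equation commutators. This bookkeeping yields \eqref{eq:delta_tot_2_2}.

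For \eqref{eq:delta_tot_1_2}, I would start instead from \eqref{eq:delta1} with the same substitutions. The terms $|tA_H|^2$, $\trace(A_H t(A_H t))$ and $A_{nH}$ drop out, leaving $-3|A_H|^2$, $3m\langle A_H(JH),JH\rangle$ and the $\sigma$-terms. The $\sigma$-terms should evaluate to $3(m-1)|H|^2$ up to the combination matching the first formula.

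The main obstacle is the careful evaluation of the two $\sigma(\cdot,tH)$ trace terms and the $[A_H,A_{JE_i}]$ commutators via the Ricci equation, where the $c$-dependent constants $4(m-1)$ versus $3(m-1)$ must be tracked. I would first verify these constants in the case $m=2$ against Chen's formula for totally real surfaces \cite{Chen74}.
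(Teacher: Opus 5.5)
Your outline follows the paper's route: specialize Theorem~\ref{p:delta} and \eqref{eq:delta1} to $V=H$, use $tA_H=0$, $nH=0$, $n(tH)=-H$, and handle commutators with the Ricci equation. But it defers exactly the computations that make up the proof, and the set-up contains an error. With $E_1=JH/|H|$ you have $JE_1=-H/|H|=-E_{m+1}$. So $H$ already lies in $J(T\Sigma)$, your normal ``frame'' lists it twice, and the quartic sum would count the $H$-direction twice. The paper takes $E_1=-JH/|H|$ and $E_{m+1}=JE_1=H/|H|$, followed by $JE_2,\dots,JE_m$.

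More importantly, the value of $\trace\langle t\sigma(A_H\cdot,JH),\cdot\rangle-\trace\langle t\sigma(A_H(JH),\cdot),\cdot\rangle$ is only guessed. Your total symmetry of $\langle\sigma(X,Y),JZ\rangle$ does compute it, but as $|A_H|^2-m\langle A_H(JH),JH\rangle$, with no $|H|^2$ term. Inserted into \eqref{eq:delta1}, this makes the whole $c$-bracket cancel. To reach the displayed form you need the step the paper actually performs. Rewrite the difference as $\sum_i\langle[A_{JE_i},A_H]JH,E_i\rangle$ and evaluate it via \eqref{eq:ricci} and \eqref{eq:curv_cpn} as $-\sum_i\langle\bar R(JH,E_i)JE_i,H\rangle=\frac{c}{4}(m-1)|H|^2$. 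Equivalently, you need the identity $|A_H|^2-m\langle A_H(JH),JH\rangle=\frac{c}{4}(m-1)|H|^2$. Because of the factor $\frac{c}{4}$, the displayed $3(m-1)|H|^2$ is right only for $c=4$.

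For \eqref{eq:delta_tot_2_2}, your ``leftover $4(m-1)|H|^2$'' is read off from the target rather than derived, and careful bookkeeping does not give it. The Ricci equation yields $\langle[A_H,A_{JZ}]X,Y\rangle=\frac{c}{4}(\langle JH,Y\rangle\langle X,Z\rangle-\langle JH,X\rangle\langle Y,Z\rangle)$. Since $\trace(A_HA_\alpha A_HA_\alpha-A_H^2A_\alpha^2)=-\frac12|[A_H,A_\alpha]|^2$, the directions $JE_2,\dots,JE_m$ contribute $-\frac{c^2}{16}(m-1)|H|^2\le 0$. Combined with the $\sigma$-terms, this gives $2(m-1)|H|^2$ inside the bracket at $c=4$, which is consistent with \eqref{eq:delta_tot_1_2} and the Gauss equation. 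The $H$-direction contributes $m\trace A_H^3-|A_H|^4/|H|^2$, so the final sum has to run over all normal directions. The paper's extra $+(m-1)|H|^2$ comes from a sign slip in its formula for $[A_H,A_{JE_i}]$.

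You can check this on the flat Lagrangian torus in $\mathbb{C}P^2$ covered by $S^1(1/\sqrt{2})\times S^1(1/2)\times S^1(1/2)\subset S^5$. There $|H|^2=|A_H|^2=\frac14$, $A_H(JH)=0$ and $\nabla A_H=0$. The right-hand side of \eqref{eq:delta_tot_2_2} as written is positive, whereas with $2(m-1)$ and the sum over all normal directions it vanishes. So this part of your argument cannot be closed as written: you must actually compute these terms, and doing so changes the constant.
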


\begin{proof} Let $\{E_1=-JH/|H|,E_2,\ldots,E_m\}$ be an orthonormal frame field on $\Sigma$ and 
$$
\{E_{m+1}=H,E_{m+2}=JE_2,\ldots,E_{2m}=JE_m,E_{2m+1},\ldots,E_{2n}\}
$$ 
be the corresponding orthonormal frame field in the normal bundle.

First, using the Ricci equation \eqref{eq:ricci} of $\Sigma$, the fact that $\nabla^{\perp}H=0$, and \eqref{eq:curv_cpn}, we have
\begin{eqnarray}\label{eq:sigma}
\trace(\langle t\sigma(A_H\cdot,JH),\cdot\rangle)-\langle t\sigma(A_H(JH),\cdot),\cdot\rangle)=\\\nonumber=\sum_{i=1}^m(\langle\sigma(A_HJH,E_i),JE_i\rangle-\langle\sigma(A_HE_i,JH),JE_i\rangle)=\sum_{i=1}^m\langle[A_{JE_i},A_H]JH,E_i\rangle&\\\nonumber=-\sum_{i=1}^m\langle\bar R(JH,E_i)JE_i,H\rangle=-\sum_{i=1}^m(-|H|^2+\langle JH,E_i\rangle^2)=(m-1)|H|^2.
\end{eqnarray}
Replacing in \eqref{eq:delta1}, and taking into account that $tA_H=0$, $nH=0$, and $n(tH)=-H$, one obtains \eqref{eq:delta_tot_1_2}.

Now, we have, again from the Ricci equation of $\Sigma$ and \eqref{eq:curv_cpn}, that $[A_H,A_{E_{\alpha}}]=0$, for any $\alpha\geq 2m+1$, and
\begin{equation*}
\langle[A_H,A_{JE_i}]X,Y\rangle=-\langle\bar R(X,Y)H,JE_i\rangle=\langle JH,X\rangle\langle Y,E_i\rangle-\langle JH,Y\rangle\langle X,E_i\rangle,
\end{equation*}
for any $i\geq 2$. It follows that
\begin{eqnarray*}
\sum_{i\geq 2}\sum_{j=1}{m}\langle A_H([A_{JE_i},A_H])A_{JE_i}E_j,E_j\rangle=\sum_{i,j}\langle[A_{JE_i},A_H]A_{JE_i}E_j,A_HE_j\rangle\\=\sum_{i\geq 2}(\langle A_{JE_i}E_i, A_HJH\rangle-\langle A_{JE_i}JH,A_HE_i\rangle)\\=\trace(\langle t\sigma(A_H\cdot,JH),\cdot\rangle-\langle t\sigma(A_H(JH),\cdot),\cdot\rangle))=(m-1)|H|^2.
\end{eqnarray*}
We come to equation \eqref{eq:delta_tot_2_2} by replacing this term in \eqref{eq:delta1} and again using $tA_H=0$, $nH=0$, and $n(tH)=-H$.
\end{proof}

\begin{proposition} Let $\Sigma^m$, $m\leq n$, be a compact PMC totally real submanifold in a non-flat complex space form $N^n(c)$ with $JH$ tangent to $\Sigma^m$, positive sectional curvature, and $c\langle\phi_H(JH),JH\rangle\geq 0$, where $\phi_H=A_H-|H|^2\id$ is the traceless part of $A_H$. If 
$$
c|A_H|^2\leq c(m-1)|H|^2,
$$
then $\Sigma^m$ is pseudo-umbilical and $|H|^2=(m-1)/m$.
\end{proposition}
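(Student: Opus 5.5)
The plan is to integrate the Simons type formula \eqref{eq:delta_tot_1_2} of Proposition \ref{p:tot2} over the compact submanifold $\Sigma^m$ and show that, under the hypotheses, every term on its right-hand side is non-negative. Since $\int_{\Sigma}\Delta|A_H|^2=0$, each of these terms must then vanish identically, and the conclusions are read off from the vanishing.

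\emph{Rewriting the curvature bracket.} Using $|JH|=|H|$ and $A_H=\phi_H+|H|^2\id$, I will write
$$
\langle A_H(JH),JH\rangle=\langle\phi_H(JH),JH\rangle+|H|^4 .
$$
The bracket $\frac{c}{4}\{\cdots\}$ in \eqref{eq:delta_tot_1_2} then splits into three pieces:
\begin{itemize}
\item $\frac{3}{4}\big(c(m-1)|H|^2-c|A_H|^2\big)$, which is $\geq 0$ by the hypothesis $c|A_H|^2\leq c(m-1)|H|^2$;
\item $\frac{3m}{4}\,c\langle\phi_H(JH),JH\rangle$, which is $\geq 0$ by the hypothesis on $\phi_H$;
\item a remaining term, a multiple of $c|H|^4$ (when $-3|A_H|^2$ is expanded via $|A_H|^2=|\phi_H|^2+m|H|^4$ it cancels the $|H|^4$ produced above).
\end{itemize}

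\emph{The obstacle.} The third piece is the delicate step. I would first try to arrange the split so that this residual term is absorbed into the first one. Concretely, I would compare $c|A_H|^2\leq c(m-1)|H|^2$ with $c|\phi_H|^2\leq c\big((m-1)|H|^2-m|H|^4\big)$ and use whichever grouping leaves only non-negative summands. I expect the bookkeeping of the sign of $c$ here to be the main obstacle, possibly requiring the cases $c>0$ and $c<0$ to be treated separately.

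\emph{The last term.} Since the sectional curvature is positive, $R_{ijij}>0$, so
$$
\sum_{i<j}(\lambda_i-\lambda_j)^2R_{ijij}\geq 0,
$$
with equality at a point if and only if all eigenvalues $\lambda_i$ of $A_H$ coincide there.

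\emph{Conclusions.} Integrating \eqref{eq:delta_tot_1_2} then yields:
\begin{itemize}
\item $\nabla A_H=0$;
\item $\lambda_1=\cdots=\lambda_m$ everywhere, so $A_H=|H|^2\id$ and $\Sigma^m$ is pseudo-umbilical;
\item equality $|A_H|^2=(m-1)|H|^2$ in the hypothesis.
\end{itemize}
Pseudo-umbilicity gives $|A_H|^2=m|H|^4$. Combining this with the equality, and noting $H\neq 0$ since otherwise the statement is void, gives $m|H|^4=(m-1)|H|^2$, that is, $|H|^2=(m-1)/m$. Finally, I would check consistency by substituting $A_H=|H|^2\id$ back into \eqref{eq:delta_tot_2_2}, which also serves as a test of the sign analysis above.
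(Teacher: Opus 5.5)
Your strategy is the paper's: integrate \eqref{eq:delta_tot_1_2} over $\Sigma$, discard non-negative terms, use $K>0$ to force $\lambda_1=\cdots=\lambda_m$, and solve $m|H|^4=(m-1)|H|^2$. The paper gets past the point you call the obstacle only by dropping the whole term $\frac{3mc}{4}\langle A_H(JH),JH\rangle$ as if it were non-negative. You are right that the hypothesis controls only $c\langle\phi_H(JH),JH\rangle$.

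The gap is genuine. No regrouping removes the residual $\frac{3m}{4}c|H|^4$; both of your groupings leave exactly this term.
\begin{itemize}
\item For $c<0$ the residual is strictly negative whenever $H\neq 0$. Nothing in the identity compensates it, since neither $|\nabla A_H|^2$ nor $\sum_{i<j}(\lambda_i-\lambda_j)^2R_{ijij}$ has a quantitative lower bound.
\item For $c>0$ the residual is non-negative and the integration goes through. But then it is one of the terms that must vanish identically, so $H=0$. Your list of conclusions omits this, and the final division of $m|H|^4=(m-1)|H|^2$ by $|H|^2$ (which the paper also performs) is illegitimate.
\end{itemize}
The consistency check you planned exposes the problem. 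For $A_H=|H|^2\id$ and $\nabla A_H=0$, formula \eqref{eq:delta_tot_1_2} reduces to $0=\frac{3c}{4}(m-1)|H|^2$. So a pseudo-umbilical example with $|H|^2=(m-1)/m$ and $m\geq 2$ is incompatible with the formula itself.

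In fact, for $m\geq 2$ the hypotheses force minimality without any Simons formula. Since $\Sigma$ is totally real and $\nabla^{\perp}H=0$, the vector $\bar\nabla_X(JH)=J\bar\nabla_XH=-J(A_HX)$ is normal. Comparing with the Gauss formula for the tangent field $JH$ gives $\nabla_X(JH)=0$. A non-zero parallel vector field makes every plane containing it flat, contradicting $K>0$; hence $H=0$.

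Your remark that $H=0$ would make the statement ``void'' is therefore also wrong. A totally geodesic $\mathbb{R}P^m\subset\mathbb{C}P^n$ with $2\leq m\leq n$ satisfies all the hypotheses and is trivially pseudo-umbilical, yet $|H|^2=0\neq(m-1)/m$. So the missing step in your proposal cannot be supplied. What the integral argument actually yields, for $c>0$, is only that such a submanifold is minimal.
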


\begin{proof} From equation \eqref{eq:delta_tot_1_2}, we have
$$
\frac{1}{2}\Delta|A_H|^2\geq\frac{3c}{4}\{-|A_H|^2+(m-1)|H|^2\}\geq 0,
$$
and then, integrating over $\Sigma$, one obtains
$$
0\geq c\int_{\Sigma}\{-|A_H|^2+(m-1)|H|^2\}\geq 0,
$$
which means that $|A_H|^2=(m-1)|H|^2$, and all the inequalities we used become equalities. Since the sectional curvature is positive, it follows that $\lambda_i=\lambda_j$, for all $i,j\in\{1,2,\ldots,m\}$, i.e., $\Sigma$ is pseudo-umbilical. This means that $|A_H|^2=m|H|^4$, and, therefore, $|H|^2=(m-1)/m$.
\end{proof}

\section{A Simons type formula for PMC submanifolds in $\mathbb{C}P^n\times\mathbb{R}$ and $\mathbb{C}H^n\times\mathbb{R}$ and applications}

Let $\Sigma^m$, $m\leq 2n$, be a submanifold of a product space $N^n(c)\times\mathbb{R}$, where $N^n(c)$ is a complex space form, such that there exists a normal vector field $V$ satisfying $\nabla^{\perp}V=0$ and $\trace A_V=\cst$. Consider the cosymplectic structure $(\varphi,\eta,\xi)$ on $N^n(c)\times\mathbb{R}$ and, as in previous sections, decompose the vector fields $\varphi X=tX+nX$ and $\xi=T+N$ in their tangent and normal parts, respectively. Then, working exactly as in the case of complex space forms and using the fact that the cosymplectic structure is parallel, one can derive the following Simons type equation. 

\begin{theorem}\label{thm:delta2_prod}
Let $\Sigma^m$ be a submanifold of $N^n(c)\times\mathbb{R}$. If $V$ is a normal vector field parallel in the normal bundle, with $\trace A_V=\cst$, then
\begin{eqnarray}\label{eq:delta2_prod}
\frac{1}{2}\Delta|A_V|^2&=&|\nabla A_V|^2+\frac{c}{4}\{m|A_V|^2-2m|A_VT|^2+6|tA_V|^2-|T|^2|A_V|^2\\\nonumber &&+3(\trace A_V)\langle A_VT,T\rangle+6\trace(A_V(tA_{nV}))\\\nonumber &&+3\trace(A_VA_{n(tV)})-(\trace A_V)^2+6\trace(A_V(t(A_Vt)))\\\nonumber &&+3\trace(\langle t(\sigma(A_V\cdot,tV)),\cdot\rangle)-3\trace(\langle t(\sigma(A_V(tV),\cdot),\cdot\rangle)\\\nonumber &&+m\langle V,N\rangle(\trace(A_VA_N)-(\trace A_V)\langle H,N\rangle)\\\nonumber&&+3m\langle A_V(tH),tV\rangle\}+\sum_{\alpha=2m+1}^{2n}\{\trace(A_VA_{\alpha}A_VA_{\alpha}-A_V^2A_{\alpha}^2)\\\nonumber &&+\trace(A_{\alpha})\trace(A_V^2A_{\alpha})-(\trace(A_VA_{\alpha}))^2\},
\end{eqnarray}
where $\{E_{\alpha}\}_{\alpha=m+1}^{n+1}$ is a local orthonormal frame field in the normal bundle.
\end{theorem}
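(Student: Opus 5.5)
The plan is to repeat, step by step, the computation that led to Theorem~\ref{p:delta}, with \eqref{eq:curv_cpn} replaced by the cosymplectic curvature \eqref{eq:curv_product} and \eqref{eq:R_cpn} by \eqref{eq:R_product}. Everything is tracked in the decompositions $\varphi X=tX+nX$ and $\xi=T+N$. The tools are $\tilde\nabla\varphi=0$ and $\tilde\nabla\xi=0$, which give the tangent/normal formulas
\begin{equation*}
\nabla_XT=A_NX,\qquad \nabla^{\perp}_XN=-\sigma(X,T),
\end{equation*}
together with the analogous identities for $t$ and $n$ already used in Section~4.

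\textbf{Step 1: the Codazzi defect.} First I write the Codazzi equation for $A_V$. Since $\nabla^{\perp}V=0$, this gives $(\nabla_XA_V)Y-(\nabla_YA_V)X=(\tilde R(X,Y)V)^{\top}$. By \eqref{eq:curv_product} this equals $\frac{c}{4}$ times the sum of two groups:
\begin{itemize}
\item the $\varphi$-terms $\langle\varphi X,V\rangle tY-\langle\varphi Y,V\rangle tX-2\langle X,\varphi Y\rangle tV$, exactly as in \eqref{eq:Codazzi};
\item the new $\eta$-terms $\langle V,N\rangle\big(\eta(Y)X-\eta(X)Y\big)$, since $\eta(V)=\langle V,N\rangle$.
\end{itemize}

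\textbf{Step 2: divergence and its derivative.} Tracing the defect with $\trace A_V=\cst$ gives
\begin{equation*}
\sum_i(\nabla_{E_i}A_V)E_i=\tfrac{c}{4}\big(3t^2V-(m-1)\langle V,N\rangle T\big).
\end{equation*}
I then differentiate this along $X$ in a geodesic frame, as in \eqref{eq:cx}. Two derivatives appear:
\begin{itemize}
\item $X\langle V,N\rangle=-\langle\sigma(X,T),V\rangle=-\langle A_VT,X\rangle$;
\item $\nabla_XT=A_NX$.
\end{itemize}
These produce, after pairing with $A_V$, terms of the form $|A_VT|^2$ and $\langle V,N\rangle\trace(A_VA_N)$.

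\textbf{Step 3: the mixed term.} Next I differentiate the Codazzi defect in $\sum_iC(E_i,X)E_i$, giving the analogue of \eqref{eq:1}. Its new contributions come from $\nabla_{E_i}$ of $\langle V,N\rangle(\eta(X)E_i-\eta(E_i)X)$. They give $\langle A_VT,\cdot\rangle$ terms, and terms $\langle V,N\rangle\langle A_NE_i,\cdot\rangle$ whose trace brings in $m\langle H,N\rangle$.

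\textbf{Step 4: the curvature commutator and assembly.} I compute $\sum_i[R(E_i,X),A_V]E_i$ from \eqref{eq:R_product}, the analogue of \eqref{R}. The $\eta$-part contributes $-|T|^2A_VX$, $\eta(X)A_VT$, $(\trace A_V)\eta(\cdot)T$-type pieces and $\langle A_VX,T\rangle T$-type pieces. After pairing with $A_V$ these give $-|T|^2|A_V|^2$, $|A_VT|^2$ and $(\trace A_V)\langle A_VT,T\rangle$. I then combine everything through \eqref{eq:C} and \eqref{eq:Laplacian}. The $\varphi$- and normal-curvature parts are literally those of Theorem~\ref{p:delta}, because $\varphi$ restricted to $\xi^{\perp}$ behaves like $J$ and the terms $\varphi\xi=0$ drop out.

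\textbf{Main obstacle.} The hard part is the bookkeeping of the $\eta$-terms: they must add up exactly to
\begin{equation*}
-2m|A_VT|^2-|T|^2|A_V|^2+3(\trace A_V)\langle A_VT,T\rangle+m\langle V,N\rangle\big(\trace(A_VA_N)-(\trace A_V)\langle H,N\rangle\big).
\end{equation*}
For example, $|A_VT|^2$ arises from three separate sources (Steps 2, 3 and 4), and the coefficient $-2m$ must come out of their sum. I would check this first on the simple case where $\Sigma$ is a slice $N\times\{t_0\}$-type hypersurface. For the $\varphi$-terms there is one extra point to verify: that $\langle\varphi X,\varphi Y\rangle=\langle X,Y\rangle-\eta(X)\eta(Y)$ introduces no further $T$-corrections in traces such as $\trace(t^2A_V)$. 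Where such corrections do appear, I expect them to cancel against the terms coming from $\varphi^2=-\id+\eta\otimes\xi$.
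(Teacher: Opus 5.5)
Your route is the paper's own: the paper offers nothing beyond ``work exactly as in the complex case, using that the cosymplectic structure is parallel,'' and your Steps 1--4 are precisely that computation. As written, however, Step~2 would make the final bookkeeping fail.

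Tracing the $\eta$-part of the defect gives $\sum_i\langle E_i,\langle V,N\rangle(\eta(Z)E_i-\eta(E_i)Z)\rangle=(m-1)\langle V,N\rangle\eta(Z)$. Hence $\sum_i(\nabla_{E_i}A_V)E_i=\frac{c}{4}\big(3t^2V+(m-1)\langle V,N\rangle T\big)$, with a plus sign. With your minus sign, Step~2 contributes $\frac{c}{4}(m-1)\big(|A_VT|^2-\langle V,N\rangle\trace(A_VA_N)\big)$. The assembly would then yield $-2|A_VT|^2$ and $-(m-2)\langle V,N\rangle\trace(A_VA_N)$ instead of $-2m|A_VT|^2$ and $m\langle V,N\rangle\trace(A_VA_N)$.

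Relatedly, the identity in Step~1 should read $(\nabla_XA_V)Y-(\nabla_YA_V)X=-(\tilde R(X,Y)V)^{\top}$, since Codazzi gives $\langle\tilde R(X,Y)Z,V\rangle$. The terms you list are those of this corrected version, so only the displayed identity is off.

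With the sign fixed, everything closes. Write $D(X,Y)=(\nabla_XA_V)Y-(\nabla_YA_V)X$, so that $\sum_iC(E_i,E_i)X=\nabla_X\big(\sum_i(\nabla_{E_i}A_V)E_i\big)+\sum_i[R(E_i,X),A_V]E_i-\sum_i(\nabla_{E_i}D)(X,E_i)$. Up to the factor $c/4$, the $\eta$-contributions after pairing with $A_V$ are as follows. From the first term: $-(m-1)|A_VT|^2+(m-1)\langle V,N\rangle\trace(A_VA_N)$. From the last term: $-|A_VT|^2+(\trace A_V)\langle A_VT,T\rangle+\langle V,N\rangle\big(\trace(A_VA_N)-m(\trace A_V)\langle H,N\rangle\big)$, using $\grad\langle V,N\rangle=-A_VT$, $\nabla_XT=A_NX$ and $\trace A_N=m\langle H,N\rangle$. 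From the commutator: $-m|A_VT|^2+2(\trace A_V)\langle A_VT,T\rangle-|T|^2|A_V|^2$. Their sum is exactly the $\eta$-part of the statement.

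Your worry about corrections from $\varphi^2=-\id+\eta\otimes\xi$ can be dropped, but not for the reason you give. The complex-case computation never uses $J^2=-\id$; the result keeps $t^2$ and $tA_Vt$ unexpanded. It uses only two things:
\begin{itemize}
\item the skew-symmetry of $t$, which gives $\trace(tA_V)=0$ and $-\trace(A_Vt^2A_V)=|tA_V|^2$;
\item the structure equations $(\nabla_Xt)Y=A_{nY}X+t\sigma(X,Y)$ and $\nabla_X(tV)=A_{nV}X-tA_VX$, which hold verbatim because $\tilde\nabla\varphi=0$.
\end{itemize}
So the $\varphi$-block transfers unchanged.

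Finally, the slice test you propose is vacuous: there $T=0$ and $A=0$, so every $\eta$-term vanishes.
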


\begin{remark} As in the case of complex space forms, for a submanifold $\Sigma^m$ as in Theorem \ref{thm:delta2_prod}, equation \eqref{eq:delta2_prod} can be written as
\begin{eqnarray}\label{eq:delta1_prod}
\frac{1}{2}\Delta|A_V|^2&=&|\nabla A_V|^2+\frac{c}{4}\{3|tA_V|^2+(\trace A_V)\langle A_VT,T\rangle\\\nonumber &&-m|A_VT|^2+6\trace(A_V(tA_{nV}))\\\nonumber&&+3\trace(A_VA_{n(tV)})+3\trace(A_V(t(A_Vt)))\\\nonumber &&+3\trace(\langle t(\sigma(A_V\cdot,tV)),\cdot\rangle)-3\trace(\langle t(\sigma(A_V(tV),\cdot),\cdot\rangle)\\\nonumber &&+m\langle V,N\rangle(\trace(A_VA_N)-(\trace A_V)\langle H,N\rangle)\\\nonumber &&+3m\langle A_V(tH),tV\rangle\}+\sum_{i<j}(\lambda_i-\lambda_j)^2R_{ijij}.
\end{eqnarray}
where $\lambda_i$ are the eigenfunctions of $A_V$.
\end{remark}

In the following, we consider hypersurfaces $\Sigma^{2n}$ in $N^n(c)\times\mathbb{R}$ and denote by $\nu$ a normal unit vector field.

\begin{proposition}\label{hyper} If $\Sigma^{2n}$ is a CMC hypersurface in $N^n(c)\times\mathbb{R}$, then
\begin{eqnarray}\label{eq:hyper2_prod}
\frac{1}{2}\Delta|A|^2&=&|\nabla A|^2-|A|^4+2n|H|\trace A^3+\frac{c}{4}\{(4n-(2n+4)|T|^2)|A|^2\\\nonumber &&-4n|AT|^2+3|tA-At|^2+6n|H|\langle AT,T\rangle\\\nonumber&&+6n|H|\langle A(\varphi\nu),\varphi\nu\rangle-4n^2(2-|T|^2)|H|^2\}
\end{eqnarray}

\end{proposition}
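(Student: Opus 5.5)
The plan is to specialize Theorem \ref{thm:delta2_prod} to $V=\nu$, the unit normal, in the same way that Proposition \ref{hyper_complex} was obtained from Theorem \ref{p:delta}. For a CMC hypersurface, $\nu$ is parallel in the normal bundle and $\trace A=m|H|=2n|H|$ is constant, so the theorem applies with $m=2n$ and $A_V=A$. I will then evaluate every term of \eqref{eq:delta2_prod} using the special structure of a hypersurface.

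\textbf{Normal part.} The normal bundle is spanned by $\nu$ alone, so there are no $E_\alpha$ orthogonal to $\nu$. The final sum must therefore be replaced by the single term with $A_\alpha=A$:
\[
\trace(A^4-A^4)+(\trace A)\trace A^3-(\trace A^2)^2=2n|H|\trace A^3-|A|^4.
\]
This produces the non-$c$ terms of \eqref{eq:hyper2_prod}.

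\textbf{Structure-tensor part.} I decompose $\varphi\nu=t\nu+n\nu$ and $\xi=T+N$ with $N=\langle\xi,\nu\rangle\nu$, and use the relations
\[
\langle n\nu,\nu\rangle=\langle\varphi\nu,\nu\rangle=0,\qquad |t\nu|^2=1-\langle\nu,\xi\rangle^2=|T|^2,\qquad n(t\nu)=\langle\varphi t\nu,\nu\rangle\nu=-\langle t\nu,\varphi\nu\rangle\nu=-|T|^2\nu .
\]
Since $n\nu=0$, the term $\trace(A(tA_{n\nu}))$ vanishes. The term $3\trace(AA_{n(t\nu)})$ becomes $-3|T|^2|A|^2$, which, combined with $(2n-|T|^2)|A|^2$, gives $(4n-(2n+4)|T|^2)|A|^2$ after the doubling described below. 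The terms involving $N$ give
\[
m\langle\nu,N\rangle\bigl(\trace(A A_N)-(\trace A)\langle H,N\rangle\bigr)=2n\langle\nu,\xi\rangle^2\bigl(|A|^2-2n|H|^2\bigr),
\]
and I rewrite $\langle\nu,\xi\rangle^2=1-|T|^2$. Next, $-(\trace A)^2=-4n^2|H|^2$, and together with the $N$-term this yields $-4n^2(2-|T|^2)|H|^2$. Finally, $3(\trace A)\langle AT,T\rangle=6n|H|\langle AT,T\rangle$, $-2m|AT|^2=-4n|AT|^2$, and $3m\langle A(tH),t\nu\rangle=6n|H|\langle A(t\nu),t\nu\rangle$. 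Since $\varphi\nu=t\nu$, this last expression is $6n|H|\langle A(\varphi\nu),\varphi\nu\rangle$.

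Collecting the $|A|^2$ coefficients gives $m-|T|^2-3|T|^2+2n(1-|T|^2)=4n-(2n+4)|T|^2$, which matches \eqref{eq:hyper2_prod}.

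\textbf{Main obstacle.} The delicate step is the combination
\[
6|tA|^2+6\trace(A(t(At)))+3\trace\langle t\sigma(A\cdot,t\nu),\cdot\rangle-3\trace\langle t\sigma(A(t\nu),\cdot),\cdot\rangle .
\]
Its two $\sigma$-terms equal $3\langle A(At\nu)\ldots\rangle$-type expressions, and I expect them to cancel against pieces of the $t$-terms so as to leave exactly $3|tA-At|^2$ together with the $|T|^2|A|^2$ corrections already counted. To show this, I will use $t^2=-\id+\langle\cdot,t\nu\rangle t\nu+\langle\cdot,T\rangle T$ on the tangent space, which follows from $\varphi^2=-\id+\eta\otimes\xi$, and the antisymmetry of $t$. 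Expanding $|tA-At|^2=2|tA|^2+2\trace(AtAt)$ then lets me match the terms one by one.

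Concretely, $\langle t\sigma(X,Y),Z\rangle=\langle AX,Y\rangle\langle t\nu,Z\rangle$ up to sign, so the $\sigma$-terms reduce to $\pm3(|At\nu|^2-|At\nu|^2)$-type expressions. These cancel, or they combine with the correction terms in $|tA|^2=|A|^2-|At\nu|^2-|AT|^2$ (the analogue of the identity used in Proposition \ref{hyper_complex}), and I would track these corrections carefully to make sure the bookkeeping closes.
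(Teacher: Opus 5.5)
Your proposal is correct and is essentially the paper's proof. Both specialize Theorem \ref{thm:delta2_prod} to $V=\nu$ and evaluate each term using $n\nu=0$, $n(t\nu)=-|T|^2\nu$ and $N=\langle\xi,\nu\rangle\nu$; your frame-free computation replaces the paper's adapted frame $\{-\varphi\nu/|T|,T/|T|,E_3,\varphi E_3,\dots\}$ and so avoids its separate treatment of points where $T=0$.

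The step you hedge on closes exactly. Since $\langle t\sigma(X,Y),Z\rangle=\langle AX,Y\rangle\langle t\nu,Z\rangle$ with no sign ambiguity, both $\sigma$-traces equal $\langle A^2t\nu,t\nu\rangle$ and cancel. Skew-symmetry of $t$ gives $|tA|^2=|At|^2$, hence $6|tA|^2+6\trace(A(t(At)))=3|tA-At|^2$ identically, so no corrections from $t^2$ are needed.
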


\begin{proof} If at a point $q\in\Sigma$ we have $T_q=0$, then one can see that \eqref{eq:hyper2_prod} actually turns out to be \eqref{eq:delta2_prod}, at $q$. Let us assume that $T\neq 0$  at a point $p$ on the surface. Then, at $p$, the vector fields $\varphi\nu$ and $T$ are tangent and orthogonal to each other. Moreover, $|\varphi\nu|^2=1-\eta^2(\nu)=|T|^2$. Thus we can define $E_1=-(\varphi\nu)/|T|$ and $E_2=T/|T|$, two unit tangent vectors. Now, if $E_3$ is another unit tangent vector orthogonal to $E_1$ and $E_2$, it follows that
$$
\langle\varphi E_3,\nu\rangle=-\langle E_3,\varphi\nu\rangle=0\quad\textnormal{and}\quad \langle\varphi E_3,\varphi E_3\rangle=\langle E_3,E_3\rangle,
$$
since $E_3\perp\xi$, which means that $\varphi E_3$ is also a unit tangent vector, and then
$$
\langle\varphi E_3,E_1\rangle=-\frac{1}{|T|}(\langle E_3,\nu\rangle-|N|\langle E_3,\xi\rangle)=0,\quad\langle\varphi E_3,E_2\rangle=\frac{1}{|T|}\langle\varphi E_3,\xi\rangle=0.
$$ 
Therefore we can consider an orthonormal basis
$$
\{E_1,E_2,E_3,E_4=\varphi E_3\ldots,E_{2n-1},E_{2n}=\varphi E_{2n-1}\},
$$
in $T_p\Sigma$. It is also easy to compute 
$$
\varphi E_1=-|N|E_2+|T|\nu\quad\textnormal{and}\quad\varphi E_2=|N|E_1.
$$

Next, for any $i\geq 2$, we have $\langle t(\sigma(AE_i,\varphi\nu),E_i\rangle=\langle t(\sigma(E_i,A(\varphi\nu)),E_i\rangle=0$. Moreover, we also obtain 
$$
\langle t(\sigma(AE_1,\varphi\nu),E_1\rangle=\langle t(\sigma(E_1,A(\varphi\nu)),E_1\rangle=\langle\sigma(AE_1,E_1),\nu\rangle=|A(\varphi\nu)|^2
$$
and $|tA|^2=|At|^2=|A|^2-|AE_1|^2-|AE_2|^2$, which implies that 
$$
|tA|^2+\trace(A(t(At)))=(1/2)|tA-At|^2.
$$
Since $n\nu=0$ and $n(t\nu)=n(\varphi\nu)=-|T|^2\nu$, we replace in equation \eqref{eq:delta2_prod} and conclude the proof.
\end{proof} 

\begin{remark} For CMC hypersurfaces equation \eqref{eq:delta1_prod} reads as
\begin{eqnarray}\label{eq:hyper1_prod}
\frac{1}{2}\Delta|A|^2&=&|\nabla A|^2+\frac{c}{4}\{(2n-(2n+3)|T|^2)|A|^2-2n|AT|^2+\frac{3}{2}|tA-At|^2\\\nonumber&&+2n|H|(\langle A(\varphi\nu),\varphi\nu\rangle+\langle AT,T\rangle)-4n^2(1-|T|^2)|H|^2\}\\\nonumber &&+\sum_{i<j}(\lambda_i-\lambda_j)^2R_{ijij}.
\end{eqnarray}
\end{remark}

As before, again for the sake of simplicity, we will only consider the case $c=4$.

\begin{proposition} Let $\Sigma^{2n}$ be a complete minimal hypersurface in $\mathbb{C}P^n\times\mathbb{R}$ such that
$$
\sup_{\Sigma}\{|A|^2-(4n-(6n+4)|T|^2\}\leq k_0<0,
$$
where $k_0$ is a negative real constant. Then $\Sigma^m$ is a slice $\mathbb{C}P^n\times\{t_0\}$.
\end{proposition}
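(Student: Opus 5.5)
The plan is to apply the Simons type equation of Proposition \ref{hyper} with $c=4$ and $|H|=0$, turn it into a differential inequality for $u=|A|^2$, and close the argument with the Omori-Yau Maximum Principle (Theorem \ref{OY}), as in the $\mathbb{C}P^n$ case of Section 5.

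\textbf{Step 1: the differential inequality.} For a minimal hypersurface, \eqref{eq:hyper2_prod} reduces to
$$
\frac{1}{2}\Delta|A|^2=|\nabla A|^2-|A|^4+(4n-(2n+4)|T|^2)|A|^2-4n|AT|^2+3|tA-At|^2.
$$
I will drop $|\nabla A|^2$ and $3|tA-At|^2$, which are non-negative. I will also use $|AT|^2\leq|A|^2|T|^2$. This gives
$$
\frac{1}{2}\Delta|A|^2\geq|A|^2\big(4n-(6n+4)|T|^2-|A|^2\big)\geq -k_0|A|^2\geq 0 .
$$

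\textbf{Step 2: curvature bounds.} Since $|T|\leq 1$, the hypothesis gives $|A|^2\leq k_0+6n+4-4n<+\infty$, so $u=|A|^2$ is bounded. By the Gauss equation, with $H=0$ and the curvature \eqref{eq:R_product} bounded, we have $\ric X\geq -C_0-|A|^2$ for unit $X$ and some constant $C_0$. Hence the Ricci curvature is bounded below and Theorem \ref{OY} applies on the complete hypersurface $\Sigma^{2n}$. It yields points $p_k$ with $u(p_k)\to\sup u$ and $\Delta u(p_k)<1/k$. Passing to the limit in Step 1 gives $0\geq -k_0\sup_\Sigma|A|^2$. Since $k_0<0$, this forces $\sup|A|^2=0$, so $\Sigma$ is totally geodesic.

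\textbf{Step 3: identifying $\Sigma$.} For a totally geodesic hypersurface, the Codazzi equation gives $\langle\tilde R(X,Y)Z,\nu\rangle=0$ for all tangent $X,Y,Z$. I will expand this using \eqref{eq:curv_product}, choosing the adapted frame $E_1=-\varphi\nu/|T|$, $E_2=T/|T|$, $E_3,\varphi E_3,\ldots$ from the proof of Proposition \ref{hyper} at any point where $T\neq0$. The $\eta$-terms contribute $\eta(\nu)(\langle X,Z\rangle\eta(Y)-\langle Y,Z\rangle\eta(X))$, which forces $|N||T|=0$. The $\varphi$-terms with $X=E_3$, $Y=\varphi E_3$, $Z=E_1$ contribute a non-zero multiple of $|T|$, namely $2\langle E_3,\varphi^2E_3\rangle\langle\varphi E_1,\nu\rangle=-2|T|$. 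This forces $T=0$ whenever $n\geq2$, reflecting that $\mathbb{C}P^n$ has no totally geodesic real hypersurfaces. Therefore $T\equiv0$, so $\xi=N$ is normal and $\nu=\pm\xi$. Because $\xi$ is parallel, $\Sigma$ is an integral leaf of $\xi^\perp$, that is, a slice $\mathbb{C}P^n\times\{t_0\}$.

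\textbf{Main obstacle.} The delicate point is Step 3 and its consistency with the hypothesis. Once $A=0$, the assumption reads $4n-(6n+4)|T|^2\geq -k_0>0$, i.e. an upper bound on $|T|^2$ that is satisfied by slices with $T=0$. So the hypothesis does not by itself force $T=0$, and the conclusion must come from the Codazzi computation above. I will check carefully that this computation rules out every point with $T\neq 0$, including the borderline case $N=0$ (vertical cylinders $M^{2n-1}\times\mathbb{R}$). In that case I will invoke the classical nonexistence of totally geodesic real hypersurfaces in $\mathbb{C}P^n$.
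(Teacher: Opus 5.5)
Your proof is correct and is essentially the paper's: the paper also specializes \eqref{eq:hyper2_prod} to $H=0$, drops $|\nabla A|^2$ and $3|tA-At|^2$, uses $|AT|^2\le|T|^2|A|^2$, bounds the Ricci curvature from below, and applies Omori--Yau to $u=|A|^2$ to get $A\equiv0$. There is one small slip: the hypothesis gives $|A|^2\le k_0+4n$, not $k_0+2n+4$, but only boundedness is used.

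Your Step 3 proves the ``totally geodesic, hence a slice'' claim, which the paper only asserts. For $n\ge2$ your computation $\langle\tilde R(E_3,\varphi E_3)E_1,\nu\rangle=-2|T|$ settles it. For $n=1$, however, your planned fallback to the nonexistence of totally geodesic real hypersurfaces fails, because cylinders over great circles in $S^2\times\mathbb{R}$ are totally geodesic. In that case you should instead note that, once $A=0$, the hypothesis itself gives $(6n+4)|T|^2<4n$. Hence $N\neq0$ everywhere, and your relation $|N||T|=0$ forces $T=0$.
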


\begin{proof} For a minimal hypersurface, equation \eqref{eq:hyper2_prod} becomes
$$
\frac{1}{2}|A|^2=|\nabla A|^2-|A|^4+(4n-(2n+4)|T|^2)|A|^2-4n|AT|^2+3|tA-At|^2.
$$
Since $|AT|^2\leq|T|^2|A|^2$ and $|tA-At|^2\geq 0$, one obtains
\begin{equation}\label{eq:h}
\frac{1}{2}|A|^2\geq |A|^2(4n-(6n+4)|T|^2-|A|^2)\geq 0.
\end{equation}
The Ricci curvature of our hypersurface can be computed by using equation \eqref{eq:R_product}. Thus, for a unit tangent vector field $X$, we have
$$
\ric X=2n-1-(2n-2)\langle X,T\rangle^2+3|tX|^2-|T|^2-|AX|^2,
$$
which shows that the Ricci curvature is bounded from below and, therefore, the Omori-Yau Maximum Principle holds on $\Sigma$. We apply it to the function $u=|A|^2$ and obtain, from \eqref{eq:h}, that $\sup_{\Sigma}|A|^2=0$, which means that $\Sigma$ is totally geodesic, and hence a slice.
\end{proof}

\begin{proposition} Let $\Sigma^{2n}$ be a compact CMC hypersurface in $\mathbb{C}P^n\times\mathbb{R}$ with sectional curvature $K\geq -1$ such that
$$
|A|\leq C(n,|H|)=\frac{4n|H|(2\sqrt{n+1}-1)}{4n+3}.
$$
Then $\Sigma^m$ is a slice $\mathbb{C}P^n\times\{t_0\}$.
\end{proposition}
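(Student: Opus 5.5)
We follow the scheme of the proof of the analogous result for compact CMC real hypersurfaces in $\mathbb{C}H^n$. The starting point is equation \eqref{eq:hyper1_prod} with $c=4$:
$$
\tfrac12\Delta|A|^2=|\nabla A|^2+(2n-(2n+3)|T|^2)|A|^2-2n|AT|^2+\tfrac32|tA-At|^2+2n|H|(\langle A(\varphi\nu),\varphi\nu\rangle+\langle AT,T\rangle)-4n^2(1-|T|^2)|H|^2+\sum_{i<j}(\lambda_i-\lambda_j)^2R_{ijij}.
$$
We bound every term from below by an expression in $|A|$, $|H|$ and $n$ alone.

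\textbf{Step 1: the curvature term.} Since $K\ge -1$,
$$
\sum_{i<j}(\lambda_i-\lambda_j)^2R_{ijij}\ge-\sum_{i<j}(\lambda_i-\lambda_j)^2=-2n|A|^2+4n^2|H|^2 .
$$
The $|H|^2$ part of this combines with $-4n^2(1-|T|^2)|H|^2$ to give $4n^2|T|^2|H|^2\ge 0$.

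\textbf{Step 2: the remaining terms.} We use $|\nabla A|^2\ge0$ and $|tA-At|^2\ge0$. We also use $|T|\le1$, $|\varphi\nu|=|T|$, and $|AT|^2\le |T|^2|A|^2$. The two mixed terms satisfy $\langle A(\varphi\nu),\varphi\nu\rangle,\ \langle AT,T\rangle\ge -|T|^2|A|$. Collecting, the coefficient of $|A|^2$ is $-(4n+3)|T|^2$ and the mixed terms contribute $-4n|H||T|^2|A|$, so
$$
\tfrac12\Delta|A|^2\ge |T|^2\bigl(-(4n+3)|A|^2-4n|H||A|+4n^2|H|^2\bigr).
$$
This is a quadratic in $|A|$ with positive root
$$
\frac{-4n|H|+\sqrt{16n^2|H|^2+16n^2(4n+3)|H|^2}}{2(4n+3)}=\frac{2n|H|(2\sqrt{n+1}-1)}{4n+3}.
$$
This is half the stated constant $C(n,|H|)$, so the crude bounds above are off by a factor of $2$. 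The bookkeeping has to be tightened so that the root becomes exactly $C(n,|H|)$. The most likely fix is to bound the two mixed terms jointly, using that $T$ and $\varphi\nu$ are orthogonal unit-direction vectors (the frame $E_1,E_2$ from the proof of Proposition \ref{hyper}), or to use the $K\ge-1$ estimate differently. Matching this constant is where we expect the main difficulty.

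\textbf{Step 3: integration and conclusion.} Under the hypothesis $|A|\le C(n,|H|)$ the bracket is nonnegative, so $\Delta|A|^2\ge0$. Since $\Sigma$ is compact, integration gives $\Delta|A|^2=0$, and every inequality used becomes an equality. In particular $|T|^2\bigl(-(4n+3)|A|^2-4n|H||A|+4n^2|H|^2\bigr)=0$, the term $4n^2|T|^2|H|^2$ vanishes, and $\nabla A=0$.

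\textbf{Step 4: the equality case.} We must show $T\equiv0$; we expect this to be the second sticking point. Suppose $T\neq0$ on an open set. If $H\ne0$, the vanishing of $4n^2|T|^2|H|^2$ is a contradiction. If $H=0$, the equalities force $A=0$ there, because the $-(4n+3)|T|^2|A|^2$ term must vanish. A totally geodesic hypersurface with $T\neq0$ would be a vertical piece $\mathbb{R}\times(\text{totally geodesic real hypersurface of }\mathbb{C}P^n)$, and no such real hypersurface exists. So $T\equiv0$, that is, $\xi$ is normal along $\Sigma$. Then $\Sigma$ is an open subset of a slice, and by compactness $\Sigma=\mathbb{C}P^n\times\{t_0\}$. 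Note that this forces $A=0$ and $H=0$, consistent with the bound.
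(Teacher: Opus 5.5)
Your Step 2 computation is correct, and the gap you flag there is genuine. Step 3 then asserts what Step 2 could not deliver. Your estimate reads $\tfrac12\Delta|A|^2\ge|T|^2q(|A|)$ with $q(x)=-(4n+3)x^2-4n|H|x+4n^2|H|^2$. But $q(|A|)\ge0$ holds only for $|A|\le\tfrac12 C(n,|H|)$, so under the stated hypothesis the bracket can be negative: take $n=4$, $|H|=1$, $|A|=2.9\le C\approx2.92$, where $q(2.9)\approx-142$.

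Your proposed repair is too weak. Bounding the mixed terms jointly via the orthonormal pair $-\varphi\nu/|T|$, $T/|T|$ only replaces $4n|H|$ by $2\sqrt2\,n|H|$. The positive root is then $\sqrt2\,n|H|(\sqrt{8n+7}-1)/(4n+3)$, still below $C(n,|H|)$.

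The paper does not contain a missing idea either. Its proof is your computation with the same bounds; its bracket is misprinted with $+4n|H||A|$, whereas the cited bounds give $-4n|H||A|$. Its constant $C(n,|H|)$ is the root of your $q$ with the factor $2$ in the denominator of the quadratic formula omitted.

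In fact nothing can close the gap for $n\ge4$. There $C(n,|H|)>\sqrt{2n}\,|H|$; for $n=4$, $C\approx2.92|H|$ versus $\sqrt8|H|\approx2.83|H|$. The small nearly round CMC spheres of $\mathbb{C}P^n\times\mathbb{R}$ are perturbed geodesic spheres of radius $r\to0$, with $|H|\approx1/r$, $|A|^2=2n|H|^2+o(r^{-2})$ and $K\approx r^{-2}$. They satisfy every hypothesis and are not slices.

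What your argument does establish is the statement with $\tfrac12C(n,|H|)$ in place of $C(n,|H|)$. That version, like the stated one for $n\le3$ (where $C(n,|H|)<\sqrt{2n}|H|$), follows at once from $|A|^2\ge(\trace A)^2/(2n)=2n|H|^2$. This forces $H=0$ and then $A=0$, with no Simons formula needed.

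Step 4 also has an unjustified step. After integrating you only know $|T|^2q(|A|)\equiv0$. The summand $4n^2|T|^2|H|^2$ is not the slack of any inequality you used, so it need not vanish on its own, and ``if $H\ne0$ we get a contradiction'' does not follow as written.

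The paper handles this case through equality in the two mixed-term bounds, which is forced when $H\ne0$. Since $T$ and $\varphi\nu$ are orthogonal and both have length $|T|$, they can both be eigenvectors for $-|A|$ only if $A_p=0$, and then $q(0)=4n^2|H|^2=0$. More simply, $q(|A|)=0$ gives $|A|=\tfrac12C(n,|H|)<\sqrt{2n}|H|$, impossible unless $H=0$.

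Finally, for $n=1$ the space $\mathbb{C}P^1$ does have totally geodesic curves (great circles). It is compactness, not non-existence, that rules out the vertical cylinders there.
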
 

\begin{proof} The condition on the sectional curvature implies
$$
\sum_{i<j}(\lambda_i-\lambda_j)^2R_{ijij}\geq-\sum_{i<j}(\lambda_i-\lambda_j)^2=4n^2|H|^2-2n|A|^2,
$$
and then, from \eqref{eq:hyper1_prod} and since $|A|\leq C(n,|H|)$, we have
$$
\frac{1}{2}\Delta|A|^2\geq(-(4n+3)|A|^2+4n|H||A|+4n^2|H|^2)|T|^2\geq 0,
$$
where we have used $|AT|^\leq|T|^2|A|^2$, $\langle A(\varphi\nu),\varphi\nu\rangle\geq-|T|^2|A|$, $\langle AT,T\rangle\geq-|T|^2|A|$, and $|tA-At|^2\geq 0$.

Next, we integrate over $\Sigma$ and obtain
$$
0\geq(-(4n+3)|A|^2+4n|H||A|+4n^2|H|^2)|T|^2\geq 0,
$$
which means that either $|A|=C(n,|H|)$ or $T=0$ and all inequalities we have used become equalities. It follows that $|AT|^2=|T|^2|A|^2$, $\langle AT,T\rangle=-|T|^2|A|$, $\langle A(\varphi\nu,\varphi\nu\rangle=-|T|^2|A|^2$, and $tA=At$. As it is easy to see, if $T\neq 0$ at a point $p\in\Sigma$, we have $A_p=0$ and, therefore, $|H|_p=0$, which shows that $|H|=0$ everywhere, and then, from our hypotheses, $A=0$ everywhere, i.e., $\Sigma$ is a totally geodesic hypersurface and thus a slice $\mathbb{C}P^n\times\{t_0\}$, for some $t_0\in\mathbb{R}$.
\end{proof}

\begin{proposition} Let $\Sigma^m$ be a proper-biharmonic PMC anti-invariant submanifold of $\mathbb{C}P^n\times\mathbb{R}$ with $\varphi H$ normal, such that 
$$
|H|^2>\frac{(m-1)(m^2+4)+(m-2)\sqrt{(m-1)(m-2)(m^2+m+2)}}{2m^3}
$$
and the norm of its second fundamental form $\sigma$ bounded. Then $\Sigma^m$ lies in $\mathbb{C}P^n$ as a totally real pseudo-umbilical submanifold and $|H|=1$.
\end{proposition}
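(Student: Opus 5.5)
The plan follows Proposition \ref{prop:pb_cpn}: get algebraic identities from the biharmonic equation, substitute them into the Simons type formula of Theorem \ref{thm:delta2_prod} with $V=H$, and close the argument with Lemma \ref{l:oku} and the Omori--Yau Maximum Principle (Theorem \ref{OY}).

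\textbf{Step 1 (biharmonic equation).} I would take $c=4$ and compute $\trace\tilde R(\cdot,H)\cdot$ from \eqref{eq:curv_product}. Anti-invariance gives $\langle E_i,\varphi E_i\rangle=0$. Since $\varphi H$ is normal, $\langle E_i,\varphi H\rangle=\langle H,\varphi E_i\rangle=0$. A direct computation should then give
$$
\trace\tilde R(\cdot,H)\cdot=-(m-|T|^2)H-\eta(H)T+m\eta(H)\xi .
$$
Because $\nabla^\perp H=0$, the tangent part of \eqref{eq:bih} reduces to $(m-1)\eta(H)T=0$, so $\eta(H)T=0$. The normal part gives $\trace\sigma(\cdot,A_H\cdot)=(m-|T|^2)H-m\eta(H)N$. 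Taking inner products with $H$ and with normal fields orthogonal to $H$ yields $|A_H|^2$, and $\trace(A_HA_U)$ for $U\perp H$, in terms of $|H|^2$, $|T|^2$ and $\eta(H)$.

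\textbf{Step 2 (case split).} On the open set where $T\ne0$ we have $\eta(H)=0$. Where $T=0$, $\xi=N$ is normal and parallel in the ambient space, so $A_N=0$ and $\nabla^\perp N=0$. In both cases the mixed term $m\langle H,N\rangle(\trace(A_HA_N)-m|H|^2\langle H,N\rangle)$ in \eqref{eq:delta2_prod} either vanishes or becomes an explicit multiple of $\eta(H)^2$. I would also use $tA_H=0$ and $tH=0$, which hold because $t=0$ on tangent vectors and $\varphi H$ is normal. These kill all the $t$-terms, and the terms with $(\trace A_H)\langle A_HT,T\rangle$ and $|A_HT|^2$ are bounded by $|T|^2|A_H|^2$. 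The sum over $\alpha$ collapses to $-\sum(\trace A_HA_\alpha)^2$ plus $m\trace A_H^3$, using the identities from Step 1 exactly as in Proposition \ref{p:tot1}; here the Ricci equation gives $[A_H,A_\alpha]=0$, since the $\varphi$-terms of \eqref{eq:curv_product} vanish for anti-invariant $\Sigma$ with $\varphi H$ normal.

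\textbf{Step 3 (the inequality).} I would rewrite everything in terms of $\phi_H=A_H-|H|^2\id$. Biharmonicity makes $|A_H|^2$, hence $|\phi_H|^2$, an explicit function of $|T|^2$ and $\eta(H)^2$; in particular $|\phi_H|^2\le m|H|^2(1-|H|^2)$. Applying Lemma \ref{l:oku} to $\trace\phi_H^3$ should give
$$
\tfrac12\Delta|A_H|^2\ge Q(|H|,|T|^2,\eta(H)^2),
$$
where $Q$ is built from the quadratic $2|H|^2-\tfrac{m-2}{\sqrt{m(m-1)}}|\phi_H|$ plus nonnegative multiples of $|T|^2$ and $\eta(H)^2$. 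The explicit threshold on $|H|^2$ in the statement should be exactly the larger root of the resulting quadratic in $|H|^2$. Above that root, $Q\ge\varepsilon(|\phi_H|^2+|T|^2+\eta(H)^2)$ for some $\varepsilon>0$.

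\textbf{Step 4 (Omori--Yau and conclusion).} Since $|\sigma|$ is bounded, the Gauss equation \eqref{eq:R_product} gives a lower bound on Ricci curvature. I would apply Theorem \ref{OY} to a suitable function $u$, chosen so that $\sup_\Sigma u=0$ forces $\phi_H=0$, $T=0$ and $\eta(H)=0$. A natural candidate is $u=-(m-|T|^2)|H|^2+m\eta(H)^2$, up to the constant $m|H|^4$, since by Step 1 this is $-|A_H|^2$ up to a constant. Then $T=0$ and $\eta(H)=0$ mean that $\xi$ is normal and parallel with $A_\xi=0$, so $\Sigma$ lies in a slice $\mathbb{C}P^n\times\{t_0\}$ and is totally real there. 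Pseudo-umbilicity gives $|A_H|^2=m|H|^4$, and the relation from Step 1 with $T=0$, $\eta(H)=0$ then yields $|H|=1$.

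\textbf{Main obstacle.} The hardest part is Step 3, specifically controlling the cross terms involving $T$, $N$ and $\eta(H)$ sharply enough to reproduce the given constant. I would first try the split $\eta(H)T=0$ together with the crude bounds $|A_HT|^2\le|T|^2|A_H|^2$ and $|\langle A_HT,T\rangle|\le|T|^2|A_H|$. If those bounds are too lossy, I would diagonalize $A_H$ and treat the component of $T$ along the simple eigendirection separately.
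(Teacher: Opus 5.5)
There is a genuine gap in Steps 2--3. You never extract the two structural facts that make the $T$-terms controllable.

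\textbf{First, $\eta(H)\equiv0$ on all of $\Sigma$.} On the open set $\{T\neq0\}$ this is just $\eta(H)T=0$. On the interior of $\{T=0\}$, your own observation $A_N=0$ gives $m\eta(H)=\trace A_N=0$. By density and continuity $\eta(H)$ vanishes everywhere, so there is nothing to carry along.

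\textbf{Second, and this is the key point, $A_HT=0$.} Differentiate $\langle H,\xi\rangle=0$ along $\Sigma$, using $\nabla^\perp H=0$ and $\tilde\nabla\xi=0$: this gives $0=\langle\tilde\nabla_XH,\xi\rangle=-\langle A_HT,X\rangle$ for all $X$. Hence the terms $-2m|A_HT|^2+3m|H|^2\langle A_HT,T\rangle$ in \eqref{eq:delta2_prod} vanish. The relation $|A_H|^2=(m-|T|^2)|H|^2$ also forces $m|A_H|^2-|T|^2|A_H|^2-|A_H|^4/|H|^2$ to cancel. Together these give the identity the paper writes down, $\frac12\Delta|A_H|^2=|\nabla A_H|^2-m^2|H|^4+m\trace A_H^3$. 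The paper then follows the argument of \cite{FOR} to show $T\equiv0$, and invokes Proposition \ref{prop:pb_cpn}, since the hypothesis implies $|H|>(m-2)/m$.

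\textbf{Without $A_HT=0$, your Step 3 cannot succeed.} Write $m\trace A_H^3-m^2|H|^4=m\trace\phi_H^3+2m|H|^2|\phi_H|^2-m|H|^4|T|^2$. For any $|H|^2\in[\frac{m-1}{m},1)$, a range allowed by the hypothesis, consider the configuration $A_H=|H|^2\id$, $|T|^2=m(1-|H|^2)\in(0,1]$, $\eta(H)=0$. It satisfies every algebraic relation you use. With the true cross terms its reaction term is exactly $0$. With your bounds $|A_HT|^2\le|T|^2|A_H|^2$ and $|\langle A_HT,T\rangle|\le|T|^2|A_H|$ it is strictly negative. So the claimed inequality $Q\ge\varepsilon(|\phi_H|^2+|T|^2+\eta(H)^2)$ is false there, and the threshold is asserted rather than derived.

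What $A_HT=0$ provides is that $0$ is an eigenvalue of $A_H$ wherever $T\neq0$. Hence $|\phi_H|^2\ge\frac{m}{m-1}|H|^4$ at those points. This lower bound is what lets Lemma \ref{l:oku} dominate the negative term $-m|H|^4|T|^2$. Your fallback of treating the component of $T$ along an eigendirection separately does not supply this.

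\textbf{Step 4 has the wrong sign.} Applying Omori--Yau to $u=-|A_H|^2$ only yields $\Delta|A_H|^2(p_k)>-1/k$. That is compatible with any nonnegative lower bound for $\frac12\Delta|A_H|^2$. Also, $\sup(m|H|^4-|A_H|^2)=0$ merely says $\inf|\phi_H|=0$. You must use $u=|A_H|^2\le m|H|^2$. Then a coercive bound of the type you want gives $\phi_H(p_k)\to0$ and $T(p_k)\to0$. Hence $m|H|^4=\sup|A_H|^2=m|H|^2$, so $|H|=1$. Then $m-|T|^2=|A_H|^2\ge m|H|^4=m$ forces $T\equiv0$ and $\phi_H\equiv0$ globally. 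This repair is routine; the missing identity $A_HT=0$ is not.
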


\begin{proof} From Theorem \ref{thm:split} and formula \eqref{eq:curv_product}, it follows that $\Sigma$ satisfies
$$
\trace\sigma(\cdot,A_H\cdot)=(m-|T|^2)H-m\langle H,N\rangle N\quad\textnormal{and}\quad\langle H,N\rangle T=0.
$$
At any point of $\Sigma$ we have $\langle H,N\rangle=0$ or $T=0$. Assume that $T=0$ at a point $p\in\Sigma$ and then $T=0$ in a neighborhood $V_p$ of $p$. On $V_p$ we have $\langle X,\xi\rangle=0$ for all tangent vector fields $X$. Therefore $0=\langle\bar\nabla_YX,\xi\rangle=\langle\sigma(X,Y),\xi\rangle=0$, which implies that $\langle H,N\rangle=0$ on $V_p$. Thus $H$ is orthogonal to $\xi$, and also
$$
|A_H|^2=(m-|T|^2)|H|^2\quad\textnormal{and}\quad\trace(A_HA_U)=0,
$$
for all normal vector fields $U$ orthogonal to $H$.

From Theorem \ref{thm:delta2_prod}, one obtains
$$
\frac{1}{2}\Delta|A_H|^2=|\nabla A_H|^2-m^2|H|^4+m\trace A_H^3.
$$

Working in the exact same way as in the proof of Theorem~4.9 in \cite{FOR}, we can prove that $\Sigma$ lies in $\mathbb{C}P^n$ and, since $|H|>(m-2)/m$, we conclude by using Theorem \ref{prop:pb_cpn}.
\end{proof}

\end{document}